\numberwithin{equation}{section}
\theoremstyle{plain}
\newtheorem{thm}{Theorem}[section]
\newtheorem{proposition}[thm]{Proposition}
\newtheorem{lemma}[thm]{Lemma}
\newtheoremstyle{noparens}%
  {}{}%
  {\itshape}{}%
  {\bfseries}{.}%
  { }%
  {\thmname{#1}\thmnumber{ #2}\mdseries\thmnote{ #3}}
\theoremstyle{noparens}
\newtheorem{lemmaNoParens}[thm]{Lemma}
\newtheorem{thmNoParens}[thm]{Theorem}
\theoremstyle{definition}
\newtheorem{defn}[thm]{Definition}
\theoremstyle{remark}
\newtheorem{rmk}[thm]{Remark}
\newcommand{\Rmnum}[1]{\expandafter\@slowromancap\romannumeral #1@}
\begin{document}

\title{The Gehring-Hayman type theorem on pseudoconvex domains of finite type in $\mathbb{C}^2$}
\author{Haichou Li\textsuperscript{1} $\&$ Xingsi Pu\textsuperscript{2,3} $\&$ Hongyu Wang\textsuperscript{4,5}}

\address{$1.$ College of Mathematics and informatics, South China Agricultural University, Guangzhou, 510640, China}
\address{$2.$ HLM, Academy of Mathematics and Systems Science, Chinese Academy of Sciences, Beijing, 100190, China}
\address{$3.$ School of Mathematical Sciences, University of Chinese Academy of Sciences, Beijing, 100049, China}
\address{$4.$ School of Science, Beijing University of Posts and Telecommunications, Beijing, 100876, China}
\address{$5.$ Key Laboratory of Mathematics and Information Networks (Beijing University of Posts and Telecommunications), Ministry of Education, China}
\email{hcl2016@scau.edu.cn,\:puxs@amss.ac.cn,\:wanghyu@bupt.edu.cn}

\subjclass[2020]{Primary 32T25}
\keywords{quasi-geodesic, Kobayashi distance, pseudoconvex domain, Gromov hyperbolicity}
\thanks{The first author was supported by NSFC (No. 12226318, No. 12226334); the third author was supported by the Young Scientist Program of the Ministry of Science and Technology of China (No. 2021YFA1002200) and NSFC (No.12201059).}

\begin{abstract}
In this paper, we obtain the Gehring-Hayman type theorem on smoothly bounded pseudoconvex domains of finite type in $\mathbb{C}^2$. As an application, we provide a quantitative comparison between global and local Kobayashi distances near a boundary point for these domains.
\end{abstract}
\maketitle

\section{\noindent{{\bf Introduction}}}
In the complex plane $\mathbb{C}$, the classical Gehring-Hayman Theorem on planar domains was originally established by Gehring-Hayman in their work \cite{62GH}. The theorem was stated as follows.
\begin{thmNoParens}
Let $\Omega\subsetneq\mathbb{C}$ be a simply connected planar domain, then there exists a constant $C>0$ such that for any $x,y\in\Omega$,
\[L([x,y])\leq CL(\gamma),\]
where $[x,y]$ is the hyperbolic geodesic connecting $x$ and $y$, and $\gamma\subset\Omega$ is any curve with end points $x$ and $y$, and $L(\gamma)$ denotes the Euclidean length of $\gamma$.
\end{thmNoParens}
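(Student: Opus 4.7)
The plan is to use the Riemann mapping theorem to reduce the problem to the unit disk $\mathbb{D}$ and there apply Koebe's distortion theorem. Fix a conformal equivalence $f\colon\mathbb{D}\to\Omega$ with $f(\tilde x)=x$, $f(\tilde y)=y$, and let $[\tilde x,\tilde y]_{\mathbb{D}}$ denote the hyperbolic geodesic in $\mathbb{D}$, so that $[x,y]=f([\tilde x,\tilde y]_{\mathbb{D}})$. Every arc $\gamma\subset\Omega$ from $x$ to $y$ is the push-forward of some arc $\tilde\gamma\subset\mathbb{D}$ from $\tilde x$ to $\tilde y$, and Euclidean length transforms as $L(f(\tilde\sigma))=\int_{\tilde\sigma}|f'(z)|\,|dz|$. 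The theorem therefore reduces to the one-variable integral inequality
\[
\int_{[\tilde x,\tilde y]_{\mathbb{D}}}|f'(z)|\,|dz|\;\le\;C\int_{\tilde\gamma}|f'(z)|\,|dz|.
\]

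Koebe's $1/4$ theorem and the classical distortion theorem then provide the two tools I need: the comparison $\tfrac{1}{4}(1-|z|^2)|f'(z)|\le d(f(z),\partial\Omega)\le(1-|z|^2)|f'(z)|$, and the fact that $|f'|$ oscillates by at most a bounded multiplicative factor on any hyperbolic ball of radius one in $\mathbb{D}$. I would then choose a maximal sequence $\tilde p_0=\tilde x,\tilde p_1,\dots,\tilde p_N=\tilde y$ on $[\tilde x,\tilde y]_{\mathbb{D}}$ at pairwise hyperbolic distance one, and set $p_k=f(\tilde p_k)$, $r_k=d(p_k,\partial\Omega)$. Koebe distortion identifies $f$ on the hyperbolic unit ball at $\tilde p_k$ with an almost-similarity of ratio $|f'(\tilde p_k)|$, so the sub-arc of $[x,y]$ between $p_k$ and $p_{k+1}$ has Euclidean length comparable to $r_k$, giving $L([x,y])\asymp\sum_{k=0}^{N-1}r_k$.

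The main obstacle, and the heart of the argument, is the matching lower bound $L(\gamma)\gtrsim\sum_k r_k$ for an arbitrary connecting curve $\gamma$. My plan is to introduce the Koebe shells $A_k=B(p_k,r_k/2)\setminus B(p_k,r_k/4)$, which lie in $\Omega$ and, after passing to a sub-sequence of $\{p_k\}$ with controlled packing, have bounded overlap multiplicity. The key geometric claim is that $\gamma$ must cross each such $A_k$: the hyperbolic geodesic passes through $B(p_k,r_k/4)$, and by simple connectedness of $\Omega$ the curve $\gamma$ is homotopic (rel endpoints) to $[x,y]$, so it cannot be pushed off the ball $B(p_k,r_k/4)$ without first traversing the surrounding shell $A_k$. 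Each such traversal contributes Euclidean length at least $r_k/4$ to $\gamma$, and bounded-overlap permits the contributions to be summed, yielding the desired lower bound. The most delicate ingredient is the packing/covering lemma that controls the overlap multiplicity of the shells $A_k$; this is where the specific bi-Lipschitz relation between the hyperbolic and quasi-hyperbolic metrics on simply connected planar domains (another consequence of Koebe) is essential, as it allows the hyperbolic unit spacing on $[\tilde x,\tilde y]_{\mathbb{D}}$ to translate into genuine Euclidean separation of the $B(p_k,r_k/4)$ up to a tame constant.
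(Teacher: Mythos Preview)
The paper does not prove this statement; it is quoted in the introduction as the classical 1962 result of Gehring--Hayman and attributed to \cite{62GH}. There is therefore no ``paper's own proof'' to compare against, and I can only assess your proposal on its merits.

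Your reduction via the Riemann map and the use of Koebe distortion to obtain $L([x,y])\asymp\sum_k r_k$ are standard and correct. The gap is in the lower bound for $L(\gamma)$. The topological claim that $\gamma$ must cross each shell $A_k=B(p_k,r_k/2)\setminus B(p_k,r_k/4)$ is false: in a simply connected domain \emph{every} curve from $x$ to $y$ is homotopic rel endpoints to $[x,y]$, so homotopy carries no information, and nothing forces $\gamma$ to enter $B(p_k,r_k/2)$ at all. Concretely, take $\Omega=\mathbb{D}$, $x=-0.9$, $y=0.9$; the hyperbolic geodesic is the real segment, one of your points is $p_k=0$ with $r_k=1$ and shell $\{1/4<|z|<1/2\}$, yet the arc $\gamma(t)=0.9\,e^{i\pi(1-t)}$ stays on $|z|=0.9$ and never meets that shell.

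What the genuine argument uses is not a homotopy obstruction but a \emph{separation} property: in $\mathbb{D}$ the hyperbolic geodesic through $p_k$ perpendicular to $[\tilde x,\tilde y]_{\mathbb D}$ is a full crosscut of the disk separating $\tilde x$ from $\tilde y$, so $\tilde\gamma$ must cross it. One then controls, for each $k$, either the crossing point is within bounded hyperbolic distance of $p_k$ (and Koebe gives a contribution $\gtrsim r_k$), or it is hyperbolically far, hence Euclidean-close to $\partial\mathbb D$, and a length--area or extremal length estimate (this is where the analytic work lies) shows that the image still contributes $\gtrsim r_k$. Your outline recovers the first alternative but has no mechanism for the second; the bounded-overlap packing lemma you mention is a side issue by comparison.
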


Later, in the real space $\mathbb{R}^n$, $n\geq2$, Gehring-Osgood \cite{GO79} generalized the result to quasihyperbolic geodesics on uniform domains by considering the quasihyperbolic metrics. Moreover, it was used to characterize the Gromov hyperbolicity of domains in $\mathbb{R}^n$(see \cite{BHK01,BB03}).

On the other hand, in the complex space $\mathbb{C}^n$, $n\geq2$, Liu-Wang-Zhou \cite{2022Bi} extended this result to $m$-convex domains by considering the Kobayashi metrics. Subsequently, Nikolov-\"{O}kten \cite{NA22} obtained similar results for more general domains and improved the index for $m$-convex domains. Recently, in our work \cite{liu2023bi}, we proved that the index can be $\frac{1}{m}$. Furthermore, Kosi\'{n}ski-Nikolov-Thomas \cite{23KNT} showed that for bounded strongly pseudoconvex domains with $C^{2,\alpha}$-smooth boundary, the index is 1.
In this paper, we get the Gehring-Hayman type theorem for smoothly bounded pseudoconvex domains of finite type in $\mathbb{C}^2$. The main theorem is as follows.

\begin{thm}\label{lgh}
Let $\Omega\subset \mathbb{C}^2$ be is a smoothly bounded pseudoconvex domain of finite type $m$. Then there exists a constant $C=C(\lambda)>0$ such that for every $x,y\in\Omega$,
\begin{align}\label{GH}
L(\gamma)\leq C|x-y|^{\frac{1}{m}},
\end{align}
where $\gamma$ is a Kobayashi $\lambda$-quasi-geodesic connecting $x$ and $y$ with $\lambda\geq1$. Moreover, denoting $H_\gamma=\max_{z\in\gamma}\delta(z)$, there exists a constant $C'=C'(\lambda)>0$ such that
\begin{align}\label{Sep}
H_\gamma^{\frac{1}{m}}\geq C'L(\gamma).
\end{align}
\end{thm}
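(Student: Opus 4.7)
\emph{Overall strategy.} The plan rests on three ingredients that I treat as available from the paper's earlier development: (i) the sharp lower bound of the Kobayashi infinitesimal metric, $K_\Omega(z, v) \gtrsim |v|/\delta(z)^{1/m}$, valid in every direction (the normal direction actually obeys the stronger bound $|v|/\delta(z)$, but the uniform version above is what I will use); (ii) the Gromov hyperbolicity of $(\Omega, d_K)$; and (iii) sharp two-sided boundary asymptotics for $d_K$, in particular $d_K(z, w_0) = \tfrac{1}{2}\log(1/\delta(z)) + O(1)$ for a fixed interior basepoint $w_0$.

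\textbf{Step 1 (length integral).} Parametrize $\gamma$ by Kobayashi arc length $s\in[0,T]$, so $T = L_K(\gamma) \leq \lambda(d_K(x,y)+1)$. Then (i) immediately yields
\[
L(\gamma) \leq C \int_0^T \delta(\gamma(s))^{1/m}\,ds. \qquad (\ast)
\]

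\textbf{Step 2 (proof of \eqref{Sep}).} Writing $H=H_\gamma$, define the dyadic shells
\[
E_k = \bigl\{s\in[0,T] : 2^{-(k+1)}H < \delta(\gamma(s)) \leq 2^{-k}H\bigr\}, \qquad k\geq 0.
\]
The key technical claim is a uniform shell-time bound $|E_k|\leq C_0(\lambda)$. Granting it, inserting into $(\ast)$ and summing gives
\[
L(\gamma) \leq C\sum_{k\geq 0} |E_k|\,(2^{-k}H)^{1/m} \leq C\,C_0(\lambda)\,H^{1/m}\sum_{k\geq 0} 2^{-k/m} = C'(\lambda)\,H^{1/m},
\]
which is \eqref{Sep}. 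To prove the shell bound I would use Gromov hyperbolicity: $\gamma$ lies within bounded Hausdorff distance of a true $d_K$-geodesic, along which (iii) forces $s\mapsto -\log\delta(\gamma(s))$ to grow linearly in $|s-s_*|$ away from the peak $s_*$ (up to bounded additive error). Consequently the preimage of any dyadic interval in $\delta$ under a geodesic has uniformly bounded Kobayashi measure, and the quasi-geodesic inherits the bound up to a constant depending on $\lambda$.

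\textbf{Step 3 (proof of \eqref{GH}).} I split according to the size of $d_K(x,y)$.

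\emph{Global regime} $d_K(x,y)\geq 1$: I establish the peak estimate $H_\gamma\leq C(\lambda)|x-y|$. Choose $z_*\in\gamma$ with $\delta(z_*)=H$ and apply \eqref{Sep} to the sub-arcs $\gamma_{[x,z_*]}$ and $\gamma_{[z_*,y]}$ to obtain $|x-z_*|,\,|y-z_*| \leq C\,H^{1/m}$; the $1$-Lipschitz property of $\delta$ gives $H-\delta(x)\leq |x-z_*|$ and similarly at $y$. The hypothesis $d_K(x,y)\geq 1$ combined with (iii) bounds $\delta(x)+\delta(y)$ suitably against $H$, and a short chase closes the loop to yield $H \lesssim |x-y|$. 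Combined with \eqref{Sep}, this gives \eqref{GH}.

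\emph{Local regime} $d_K(x,y)<1$: Here $T\leq 2\lambda$, and on the Kobayashi ball $\{d_K(z,x)<C\}$ both $\delta$ and the Kobayashi infinitesimal metric are quantitatively comparable to their values at $x$. Hence $L(\gamma)$ and $|x-y|$ are comparable up to constants depending on $\lambda$, and since $|x-y|$ is bounded above by $\operatorname{diam}(\Omega)$ we conclude $L(\gamma)\lesssim |x-y| \leq C|x-y|^{1/m}$.

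\textbf{Main obstacle.} The uniform shell-time bound $|E_k|\leq C_0(\lambda)$ in Step 2 is the principal technical step: it is precisely the place where finite-type Kobayashi asymptotics and Gromov hyperbolicity have to be woven together. Once this bound is in hand, \eqref{Sep} follows by geometric series summation, and \eqref{GH} follows by a routine case analysis combined with the peak estimate.
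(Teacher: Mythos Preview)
Your overall architecture (metric lower bound $K_\Omega(z,v)\gtrsim |v|\delta(z)^{-1/m}$, Gromov hyperbolicity, shell decomposition) is sound, and Step~2 can indeed be completed: ingredient~(iii) makes $s\mapsto d_K(\gamma(s),w_0)$ approximately V-shaped along a geodesic (the tree approximation in a $\delta$-hyperbolic space), so each $d_K(\cdot,w_0)$-shell is visited for bounded time, and via~(iii) this transfers to the $\delta$-shells $E_k$.  That part is fine, and it gives~\eqref{Sep} cleanly.

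The genuine gap is in Step~3.  In the \emph{local regime} your claim ``$L(\gamma)$ and $|x-y|$ are comparable'' is false.  Take $x,y$ at height $\delta(x)\approx\delta(y)$ with $x-y$ purely normal and $d_K(x,y)=\varepsilon$ small; then $|x-y|\asymp \varepsilon\,\delta(x)$, but a $\lambda$-quasi-geodesic may detour tangentially by $\asymp\varepsilon\,\delta(x)^{1/m}$ while keeping Kobayashi length $\asymp\varepsilon$.  Thus $L(\gamma)/|x-y|\asymp \delta(x)^{1/m-1}\to\infty$.  What \emph{is} true in this regime is $L(\gamma)\lesssim \delta(x)^{1/m}d_K(x,y)$ and $|x-y|\gtrsim \delta(x)\,d_K(x,y)$, which combine to give $L(\gamma)\lesssim |x-y|^{1/m}$ only because $d_K(x,y)<1$ kills the extra power; you should argue this way instead.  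In the \emph{global regime} your ``short chase'' from $H-\delta(x)\le CH^{1/m}$ to $H\lesssim|x-y|$ does not close: when $H$ is small the inequality $H-\delta(x)\le CH^{1/m}$ is vacuous, so the sub-arc trick alone cannot bound $H$.  The clean route is the one the paper takes: use the two-sided distance estimates (Lemma~\ref{dg} and Lemma~\ref{up}) directly to get $H_\gamma\lesssim |x-y|+\sqrt{\delta(x)\delta(y)}$, then split on whether $\sqrt{\delta(x)\delta(y)}\le |x-y|$.

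For comparison, the paper does not prove~\eqref{Sep} first and then deduce~\eqref{GH}; it proves the stronger Finsler lower bound $d_{\widetilde K}(x,y)\ge 2\log\bigl(L(\gamma)^{m}/\sqrt{\delta(x)\delta(y)}\bigr)-C$ (Theorem~\ref{Fin}) via a dyadic decomposition along $\gamma$ and the stability of quasi-geodesics, and then reads off both~\eqref{GH} and~\eqref{Sep} by pairing this with the upper bound $d_{\widetilde K}(x,y)\le 2\log\bigl(1+|x-y|/\sqrt{\delta(x)\delta(y)}\bigr)+C$ and a Gromov-product argument.  Your shell-time approach to~\eqref{Sep} is a legitimate alternative to Theorem~\ref{Fin}, but to finish~\eqref{GH} you still need the peak estimate $H_\gamma\lesssim |x-y|+\sqrt{\delta(x)\delta(y)}$ obtained from the distance bounds, not from the sub-arc manoeuvre.
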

\begin{rmk}
(1) Observing that for $m$-convex domains that are Gromov hyperbolic, the proof of $\lambda$-quasi-geodesic in \cite[Theorem 3.3]{liu2023bi} can be substituted with the proof of Theorem \ref{lgh}.

(2) It is worth noting that, inequality (\ref{Sep}) is also valid for smoothly bounded convex domains of finite type in $\mathbb{C}^n$, and Wang has obtained a similar estimate in \cite[Lemma 5.6]{wang22}. Moreover, for bounded strongly pseudoconvex domains with $C^2$-smooth boundary, the exponents are reduced to $1/2$ when utilizing Gromov hyperbolicity. This observation extends Corollary 12 in \cite{NA22} and further demonstrates the validity of \cite[Proposition 3]{23KNO} under lower boundary regularity assumptions.
\end{rmk}

In order to prove the theorem, we first illustrate the estimate in \cite[Theorem 1.3]{liu2023bi} is valid for $\lambda$-quasi-geodesic when the domain is Gromov hyperbolic.

\begin{thm}\label{Fin}
Let $\Omega \subset \mathbb{C}^{n}(n\geq2)$ be a bounded Gromov hyperbolic domain respect to a complete Finsler distance $d_F$. Suppose that there exist constants $C_1,C_2,\alpha>0,1>\beta>0$ such that, for all $x,\:y\in\Omega$,
\[
 d_F(x,y)\geq \alpha\left|\log\left(\frac{\delta(x)}{\delta(y)}\right)\right|-C_1,
\]
and the Finsler metric
\[
F(z,X)\geq\frac{C_2\left|X\right|}{\delta(z)^{\beta}}, \:\:\: \forall z\in\Omega,\; 0\neq X\in\mathbb{C}^n.
\]
For each $x,y \in \Omega$, let $L(\gamma)$ denote the Euclidean length of a Finsler $\lambda$-quasi-geodesic $\gamma$ connecting $x$ and $y$ in $\Omega$.
Then there exists a constant $C=C(\lambda)>0$ such that
\[
d_F(x,y)\geq2\alpha\log\left(\frac{L(\gamma)^{\frac{1}{\beta}}}{\sqrt{\delta(x)\delta(y)}}\right)-C.
\]
\end{thm}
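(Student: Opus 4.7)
The plan is to reduce the $\lambda$-quasi-geodesic estimate to the corresponding one for a true geodesic using the Morse stability lemma for Gromov hyperbolic spaces, and then integrate the Euclidean length against the Finsler arc-length measure using a two-sided pointwise bound on $\delta$ that forces the correct coefficient $2\alpha$. Fix a Finsler geodesic $\eta:[0,T]\to\Omega$ from $x$ to $y$, parametrized by Finsler arc length with $T=d_F(x,y)$. By Morse stability for $(\Omega,d_F)$, there is $R=R(\lambda)>0$ with $\gamma$ contained in the Finsler $R$-neighborhood of $\eta$; parametrize $\gamma$ by Finsler arc length $\tau\in[0,L_F(\gamma)]$ and, for each $\tau$, fix $\sigma(\tau)\in[0,T]$ with $d_F(\gamma(\tau),\eta(\sigma(\tau)))\leq R$.

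Applying hypothesis (1) to the pairs $(x,\eta(\sigma))$ and $(y,\eta(\sigma))$ yields
\[
\delta(\eta(\sigma))\;\leq\;\min\!\bigl(\delta(x)e^{\sigma/\alpha},\,\delta(y)e^{(T-\sigma)/\alpha}\bigr)\,e^{C_1/\alpha},
\]
and applying (1) once more to $(\gamma(\tau),\eta(\sigma(\tau)))$ transfers this bound to $\gamma$ at the cost of a multiplicative factor $e^{(R+C_1)/\alpha}$. From the lower bound on $F$, $L(\gamma)\leq C_2^{-1}\int_0^{L_F(\gamma)}\delta(\gamma(\tau))^\beta\,d\tau$, which is bounded by a constant (depending on $\lambda$) times $\int_0^{L_F(\gamma)}\min(\delta(x)^\beta e^{\beta\sigma(\tau)/\alpha},\delta(y)^\beta e^{\beta(T-\sigma(\tau))/\alpha})\,d\tau$.

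The crux is then to change variables from $\tau$ to $\sigma$. If $\tau_1<\tau_2$ both project to within $1$ of some common $\sigma_0\in[0,T]$, then $d_F(\gamma(\tau_1),\gamma(\tau_2))\leq 2R+2$, so the $\lambda$-quasi-geodesic condition forces $\tau_2-\tau_1\leq K(\lambda)$. Hence the preimage under $\sigma$ of each unit subinterval of $[0,T]$ has Finsler length $\leq K(\lambda)$; combined with the log-Lipschitz character of the integrand on unit intervals, this gives
\[
\int_0^{L_F(\gamma)}\!\!\min(\cdots)\,d\tau\;\leq\;C(\lambda)\!\int_0^T\!\min\!\bigl(\delta(x)^\beta e^{\beta\sigma/\alpha},\delta(y)^\beta e^{\beta(T-\sigma)/\alpha}\bigr)d\sigma\;\leq\;C'(\lambda)(\delta(x)\delta(y))^{\beta/2}e^{\beta T/(2\alpha)},
\]
where the last estimate is obtained by splitting at the switch point $\sigma^\star=T/2+(\alpha/2)\log(\delta(y)/\delta(x))$, the cases $\sigma^\star\notin[0,T]$ yielding the same bound directly. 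Taking $\beta$-th roots gives $L(\gamma)^{1/\beta}\leq C(\lambda)\sqrt{\delta(x)\delta(y)}\,e^{d_F(x,y)/(2\alpha)}$, and taking logarithms yields the desired inequality.

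The main obstacle is this change-of-variables step: one must verify that the nearest-point projection from a Finsler $\lambda$-quasi-geodesic onto a Finsler geodesic in the Gromov hyperbolic space $(\Omega,d_F)$ has multiplicity bounded in terms of $\lambda$ and the hyperbolicity constant alone. This is standard in the metric setting, but in the present Finsler framework one should check that the Morse stability lemma applies to complete Finsler distances and carefully track the dependence of $R$ and $K$ on $\lambda$. A naive estimate that replaces the $\min$ by a uniform bound on $\delta(\gamma(\tau))$ produces an extra factor of $L_F(\gamma)$ and only yields the weaker coefficient $2\alpha/\lambda$; it is the exponential decay of the two-sided bound, together with bounded multiplicity, that is essential for obtaining the sharp constant $2\alpha$.
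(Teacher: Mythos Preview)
Your argument is correct and takes a genuinely different route from the paper. The paper proceeds by a dyadic decomposition: it sets $H=\max_{z\in\gamma}\delta(z)$, disposes of the case $H\geq L(\gamma)^{1/\beta}$ directly, and in the complementary case partitions $\gamma$ into pieces on which $\delta$ lies in successive dyadic shells $[2^{-(k-j)/\beta}H,2^{-(k-j-1)/\beta}H]$. A case analysis (three sub-cases according to where certain marked points project under Morse stability) produces the intermediate inequality $d_F(x,y)\geq 2\alpha\log\bigl(H/\sqrt{\delta(x)\delta(y)}\bigr)+c\,L(\gamma)/H^{\beta}-C$, and the conclusion follows by minimizing the right-hand side over $H$. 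Your approach replaces all of this by a single integral estimate: the two-sided exponential bound $\delta(\eta(\sigma))\lesssim\min(\delta(x)e^{\sigma/\alpha},\delta(y)e^{(T-\sigma)/\alpha})$ along the geodesic, transferred to $\gamma$ via Morse stability, is integrated against Finsler arc length, and the bounded-multiplicity observation converts this into an integral over the geodesic parameter which evaluates directly to $(\delta(x)\delta(y))^{\beta/2}e^{\beta T/(2\alpha)}$.

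Your route is shorter and avoids the case splits entirely; note that after reparametrizing by Finsler arc length the curve is only a $\lambda^2$-quasi-geodesic, but this is harmless. The bounded-multiplicity step you flagged is indeed the crux and is justified exactly as you indicate: if two $\tau$-values project within $1$ of the same $\sigma_0$ then the triangle inequality and the quasi-geodesic condition bound $|\tau_1-\tau_2|$, so the preimage of each unit $\sigma$-interval is contained in a $\tau$-interval of controlled length. The Morse lemma needs only that $(\Omega,d_F)$ be a proper geodesic Gromov hyperbolic space, which is part of the hypothesis, so no extra Finsler-specific input is required. The paper's method does have the minor advantage of yielding the explicit two-parameter lower bound in $H$ and $L(\gamma)$ before optimization, which it later reuses, but for the stated theorem your argument is a clean alternative.
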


As an application, we provide a quantitative comparison between global and local Kobayashi distances near a boundary point on smoothly bounded pseudoconvex domains of finite type in $\mathbb{C}^2$. It partially extends the result in \cite[Corollary 1.5]{2023Sar}.

\begin{thm}\label{comp}
Let $\Omega\subset \mathbb{C}^2$ be a smoothly bounded pseudoconvex domain of finite type $m$. Suppose that $\Omega\cap U$ is connected in a neighborhood $U$ of $\xi\in\partial\Omega$.
Then there exists a neighborhood $V$ of $\xi$ with $V\subset\subset U$ and constants $C>0,A\geq1$ such that, for any $x,y\in\Omega\cap V$,
\begin{align}\label{loc1}
 d_{K_\Omega}(x,y)\leq d_{K_{\Omega\cap U}}(x,y)\leq d_{K_\Omega}(x,y)+C|x-y|^{\frac{2}{Am}},
\end{align}
and
\begin{align}\label{loc2}
1\leq\frac{d_{K_{\Omega\cap U}}(x,y)}{d_{K_\Omega}(x,y)}\leq 1+C\left(\delta_\Omega(x)\wedge\delta_\Omega(y)+|x-y|^{\frac{1}{m}}\right)^{\frac{2}{A}}\text{ for }x\neq y\in\Omega\cap V.
\end{align}
\end{thm}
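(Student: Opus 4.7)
The plan is: observe that the lower bounds in \eqref{loc1} and \eqref{loc2} are immediate from contractivity of the Kobayashi distance under the holomorphic inclusion $\Omega\cap U\hookrightarrow\Omega$; for the upper bounds, start with a Kobayashi quasi-geodesic in the full domain $\Omega$, confine it to $\Omega\cap U$ via Theorem \ref{lgh}, and compare the two infinitesimal Kobayashi metrics along it.

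Concretely, fix $\lambda>1$ and take a Kobayashi $\lambda$-quasi-geodesic $\gamma:[0,T]\to\Omega$ joining $x,y\in\Omega\cap V$. Theorem \ref{lgh} furnishes the Euclidean length bound $L(\gamma)\le C|x-y|^{1/m}$; shrinking $V\subset\subset U$ so that $\operatorname{dist}(V,\partial U)>C\operatorname{diam}(V)^{1/m}$ will force $\gamma\subset\Omega\cap U$ for all such $x,y$. The triangle inequality then yields the depth bound
\[
H_\gamma:=\max_{z\in\gamma}\delta_\Omega(z)\le\delta_\Omega(x)\wedge\delta_\Omega(y)+C|x-y|^{1/m}.
\]

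The analytic heart of the proof is to establish an infinitesimal localization of the Kobayashi metric adapted to the finite type setting: there should exist constants $A\ge 1$, $C_0>0$ and a possibly smaller neighborhood $V$ of $\xi$ for which
\[
F_{K_{\Omega\cap U}}(z,X)\le\bigl(1+C_0\,\delta_\Omega(z)^{2/A}\bigr)F_{K_\Omega}(z,X)\qquad(z\in\Omega\cap V,\;X\in\mathbb{C}^2).
\]
This is the finite type analogue of the classical Graham--Sibony localization and is expected to follow by combining the Catlin--McNeal non-isotropic polydisc and scaling analysis at a finite type boundary point with a plurisubharmonic peak/bump construction at $\xi$. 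Producing this estimate with the sharp exponent $2/A$ is the main technical obstacle; the remainder of the argument is essentially bookkeeping along $\gamma$.

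Granting the localization, integrating along $\gamma$ and using $L_{K_\Omega}(\gamma)\le\lambda\,d_{K_\Omega}(x,y)+c_\lambda$ leads, after letting $\lambda\downarrow1$, to
\[
d_{K_{\Omega\cap U}}(x,y)-d_{K_\Omega}(x,y)\le C\,H_\gamma^{2/A}\,d_{K_\Omega}(x,y),
\]
which together with the depth bound above gives \eqref{loc2}. For the additive statement \eqref{loc1} the factor $d_{K_\Omega}(x,y)$ must be eliminated: changing variables to the Kobayashi arclength $ds=F_{K_\Omega}(z,\dot\gamma)\,dt$ and exploiting the near-boundary relation $d\delta\asymp\delta\,ds$ valid along a quasi-geodesic will give
\[
\int_\gamma\delta_\Omega^{2/A}\,ds\;\lesssim\;\int_{\delta_{\min}}^{H_\gamma}\delta^{2/A-1}\,d\delta\;\lesssim\;H_\gamma^{2/A},
\]
whence $d_{K_{\Omega\cap U}}(x,y)-d_{K_\Omega}(x,y)\lesssim H_\gamma^{2/A}$; a short case analysis according to whether $\delta_\Omega(x)\wedge\delta_\Omega(y)\le|x-y|^{1/m}$ (in the opposite regime $x,y$ are Euclidean-close relative to the boundary and a direct linear interpolation supplies an even better bound) then converts this into the required $C|x-y|^{2/(Am)}$ of \eqref{loc1}.
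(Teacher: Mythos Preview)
Your overall architecture matches the paper's: the lower bounds are contractivity, and the upper bounds come from integrating an infinitesimal localization of the Kobayashi metric along a Kobayashi geodesic in $\Omega$ that is trapped in $\Omega\cap U$ by Theorem~\ref{lgh}. Two points, however, deserve correction.

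First, you have misidentified the ``analytic heart''. The infinitesimal estimate
\[
K_{\Omega\cap U}(z,X)\le\bigl(1+C_0\,\delta_\Omega(z)^{2/A}\bigr)K_\Omega(z,X)
\]
does \emph{not} require any Catlin--McNeal scaling or new plurisubharmonic peak construction. It is an immediate consequence of Royden's localization lemma (Lemma~\ref{metric} in the paper), which gives
\[
K_{\Omega\cap U}(z,X)\le\bigl(1+L e^{-2d_{K_\Omega}(z,\Omega\setminus U)}\bigr)K_\Omega(z,X),
\]
together with a lower bound $d_{K_\Omega}(z,\Omega\setminus U)\ge A^{-1}\log\bigl(1/\delta_\Omega(z)\bigr)-C$. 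The latter follows from the Catlin-type distance lower bound already recorded in Lemma~\ref{dg} (or \cite[Theorem 1.12]{liu2023bi}) and the bi-Lipschitz equivalence $d_{K_\Omega}\asymp d_{\widetilde K}$; the constant $A$ is nothing more than this bi-Lipschitz constant. So what you flag as the main technical obstacle is in fact free.

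Second, your route to the additive bound \eqref{loc1} via the heuristic change of variables $d\delta\asymp\delta\,ds$ is not rigorous: $\delta$ need not be monotone along $\gamma$, so the substitution $\int_\gamma\delta^{2/A}\,ds\lesssim\int_{\delta_{\min}}^{H_\gamma}\delta^{2/A-1}\,d\delta$ is unjustified. The paper instead uses the upper bound of Lemma~\ref{up} to obtain, for a geodesic parametrized by Kobayashi arclength with $\delta(\gamma(T))=H_\gamma$,
\[
\delta_\Omega(\gamma(t))\lesssim\bigl(H_\gamma+L(\gamma)\bigr)e^{-|T-t|},
\]
and then integrates $\int_{\mathbb R}e^{-2|T-t|/A}\,dt<\infty$ directly. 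This replaces your change of variables with a clean exponential-decay estimate and handles both regimes $|x-y|\gtrless\sqrt{\delta(x)\delta(y)}$ without a separate linear-interpolation argument. Finally, since $(\Omega,d_{K_\Omega})$ is complete, genuine geodesics exist and there is no need to pass through $\lambda$-quasi-geodesics and let $\lambda\downarrow 1$.
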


\begin{rmk}
The quantitative localization of Kobayashi distance on complex domains was first derived by Nikolov-Thomas \cite[Theorem 1.1]{2022Qu} near a locally $\mathbb{C}$-convexifiable point of finite type on the boundary. And inspired by their results, we relaxed the boundary regularity to Dini-smooth in \cite[Theorem 1.9]{liu2023bi}. Here we actually expand the boundary convexity for the result in $\mathbb{C}^2$.
\end{rmk}

The paper is organized as follows. In Sect.\ref{sec2} we will recall some definitions and preliminary results. In Sect.\ref{sec3} we give the proof of Theorem \ref{Fin} and Theorem \ref{lgh}. In Sect.\ref{sec4} we will illustrate the proof of Theorem \ref{comp}.

\section{Preliminaries}\label{sec2}
\subsection{Notation}
(1) For $z \in \mathbb{C}^n$, let $|\cdot|$ denote the standard Euclidean norm, and let $\left|z_1-z_2\right|$ denote the standard Euclidean distance of $z_1, z_2 \in \mathbb{C}^n$.

(2) Given an open set $\Omega \subsetneq \mathbb{C}^n, x \in \Omega$ and $v \in \mathbb{C}^n$, denote
$$
\delta(x)\text{ or }\delta_{\Omega}(x)=\inf \{|x-\xi|: \xi \in \partial \Omega\}.
$$

(3) Recall that, for real numbers $a, b$, $a \vee b:=\max \{a, b\}$ and $a \wedge b:=\min \{a, b\}$.

(4) For any curve $\gamma$, denote its Euclidean length by $L(\gamma)$.

(5) For functions $f, g$, write $f \lesssim g$ if there exists $C>0$ such that
$f \leq C g$, and we write
\[f \asymp g\text{ if }f \lesssim g\text{ and }g \lesssim f.\]

(6) Recall that, for any $p \in \partial \Omega$, the complex tangent space is given by
$$H_p \partial \Omega=\left\{X \in \mathbb{C}^n:\langle\bar{\partial} r(p), X\rangle=0\right\},$$
where $$\bar{\partial} r(p)=\left(\frac{\partial r}{\partial \bar{z}_1}(p), \ldots, \frac{\partial r}{\partial \bar{z}_n}(p)\right).$$
Here the standard Hermitian product in $\mathbb{C}^n$ is $\langle X, Y\rangle=\sum_{k=1}^n X_k \overline{Y}_k$. Therefore, for any vector $0\neq X \in \mathbb{C}^{n}$ it has a unique orthogonal decomposition $X=X_{H}+X_{N}$ with $X_{H} \in H_{p} \partial \Omega$ and $X_{N} \in N_{p} \partial \Omega$. Here $N_p\partial\Omega$ is the complex one-dimensional subspace of $\mathbb{C}^n$ orthogonal to $H_p\partial\Omega$.

\subsection{The Finsler metric}
Given a domain $\Omega\subset\mathbb{C}^n, n\geq2$, a Finsler metric on $\Omega$ is an upper semi-continuous map $F: \Omega \times \mathbb{C}^n \rightarrow [0, +\infty)$ with $F(z ; t X)=|t| F(z ; X)$ for any $z \in \Omega, \: t \in \mathbb{C}$ and $X \in \mathbb{C}^n$. The distance function $d_F$ associated with $F$ is defined by
\begin{multline}
\notag d_F(x, y)=\inf \{F\text{-length}(\gamma): \gamma:[0,1] \rightarrow \Omega \text{ is a piecewise }C^1\text{-smooth curve}\\
 \text{with }\gamma(0)=x, \gamma(1)=y\},
\end{multline}
where
$$
F\text{-length}(\gamma)=\int_0^1 F(\gamma(t) ; \dot{\gamma}(t)) d t.
$$

A very important Finsler metric in several complex variables is the Kobayashi metric. For a domain $\Omega \subset \mathbb{C}^n$, the (infinitesimal) Kobayashi metric is defined by
$$
K_{\Omega}(x ; v)=\inf \left\{|\xi|: f \in \operatorname{Hol}(\mathbb{D}, \Omega) \text {, with } f(0)=x, d(f)_0(\xi)=v\right\} .
$$
For convenience, we denote by $d_{K_\Omega}$ the Kobayashi distance associated with the Kobayashi metric $K_\Omega$, and sometimes we may omit the subscript $\Omega$. The main property of the Kobayashi distance is that it is contracted by holomorphic maps. That is, if $f: \Omega_1 \rightarrow \Omega_2$ is a holomorphic map, then
$$
\forall z, w \in \Omega_1 \quad d_{K_{\Omega_2}}(f(z), f(w)) \leqslant d_{K_{\Omega_1}}(z, w) .
$$

Recall that a $C^1$-smooth boundary point $p$ of a domain $\Omega$ in $\mathbb{C}^n$ is said to be Dini-smooth, if the outer unit normal vector $\vec{n}$ to $\partial \Omega$ near $p$ is a Dini-continuous function. This means that there exists a neighborhood $U$ of $p$ with
$$
\int_0^1 \frac{\omega(t)}{t} d t<+\infty,
$$
where
$$
\omega(t)=\omega(\vec{n}, \partial \Omega \cap U, t):=\sup \left\{\left|\vec{n}(x)-\vec{n}(y)\right|:|x-y|<t, \:\: x, y \in \partial \Omega \cap U\right\}
$$
is the respective modulus of continuity. Note that Dini-smooth is a weaker condition than $C^{1, \epsilon}$-smooth. Here a Dini-smooth domain means that each boundary point of $\Omega$ is a Dini-smooth point.

Then we have the following upper bound of Kobayashi distance.
\begin{lemmaNoParens}[{\cite[Corollary 8]{2015Estimates}}]\label{up}
Let $\Omega$ be a Dini-smooth bounded domain in $\mathbb{C}^n$ and $x, y \in \Omega$.
Then there exists a constant $C>1+\sqrt{2}/2$ such that
$$
K_{\Omega}(x, y) \leq \log \left(1+\frac{C|x-y|}{\sqrt{\delta_{\Omega}(x) \delta_{\Omega}(y)}}\right).
$$
\end{lemmaNoParens}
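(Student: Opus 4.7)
The plan is to upper-bound the Kobayashi distance by constructing a holomorphic map (or a short chain of such) from $\mathbb{D}$ into $\Omega$ sending $0$ to $x$ and a parameter $t$ to $y$, and then to apply the contraction property together with the explicit Poincar\'e distance $d_{\mathbb{D}}(0,t)=\tfrac12\log\frac{1+|t|}{1-|t|}\le\log\bigl(1+\tfrac{2|t|}{1-|t|}\bigr)$. The target form of the right-hand side suggests aiming for $\tfrac{|t|}{1-|t|}\asymp\tfrac{|x-y|}{\sqrt{\delta(x)\delta(y)}}$, so that the desired inequality comes out directly from the explicit disk formula, with the constant $C>1+\sqrt 2/2$ absorbing the numerical factors.

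First I would dichotomize on the ratio of $|x-y|$ to the boundary distances. In the \emph{bulk regime} $|x-y|\le\tfrac12\min\{\delta(x),\delta(y)\}$, the Euclidean ball $B(x,\delta(x))$ is contained in $\Omega$, so monotonicity of the Kobayashi distance gives $d_{K_\Omega}(x,y)\le d_{K_{B(x,\delta(x))}}(x,y)$, which is an explicit M\"obius-invariant quantity dominated by $\log\bigl(1+C|x-y|/\delta(x)\bigr)$; since $\delta(x)\asymp\delta(y)\asymp\sqrt{\delta(x)\delta(y)}$ in this regime, the target bound is immediate. In the \emph{near-boundary regime} $|x-y|>\tfrac12\min\{\delta(x),\delta(y)\}$, the strategy is to insert a single holomorphic disk $\phi:\mathbb{D}\to\Omega$ passing through both $x$ and $y$, constructed from the Dini-smoothness of $\partial\Omega$ near a boundary point $p$ close to the segment $[x,y]$.

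The central ingredient is this analytic-disk construction. Choose $p\in\partial\Omega$ near $[x,y]$ with inward unit normal $-\vec n(p)$. Using Dini continuity of $\vec n$, one picks coordinates in which $\partial\Omega$ is locally the graph of a $C^1$ function whose gradient has modulus of continuity $\omega$ satisfying $\int_0^1\omega(t)/t\,dt<\infty$. A direct computation shows that the complex affine disk $\zeta\mapsto z_0-\rho\zeta\,\vec n(p)$, centered at a suitable midpoint $z_0$ on $[x,y]$, remains in $\Omega$ as long as $\rho$ does not exceed a quantity controlled from below by the Dini integral and by $|x-y|+\delta(z_0)$. Rescaling and reparametrizing so that $x,y$ correspond to opposite points in $\mathbb{D}$ produces a parameter $t$ with $1-|t|\asymp\sqrt{\delta(x)\delta(y)}/(|x-y|+\sqrt{\delta(x)\delta(y)})$, which is exactly what is needed.

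The hard part is obtaining this quantitative disk control under only Dini regularity rather than $C^{1,\alpha}$. Under $C^{1,\alpha}$ smoothness one has an interior-tangent-ball inclusion of polynomial size, and the disk construction is essentially automatic; under the weaker Dini hypothesis the containment along the normal direction is only "almost" tangential, with deficit governed by $\int_0^t\omega(s)/s\,ds$. The Dini convergence is precisely what guarantees that this deficit can be absorbed so that the affine disk has radius comparable to $\sqrt{\delta(x)\delta(y)}$ rather than something smaller. Once the disk estimate is established, chaining it with the bulk-regime bound and collecting constants gives the claim.
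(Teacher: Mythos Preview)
The paper does not prove this lemma at all: it is quoted as \cite[Corollary 8]{2015Estimates} and used as a black box. There is therefore no ``paper's own proof'' to compare your sketch against; any comparison would have to be with Nikolov's original argument in that reference, not with anything in the present manuscript.

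That said, your sketch has a genuine geometric gap in the near-boundary regime. You propose to build a \emph{single} complex affine disk $\zeta\mapsto z_0-\rho\zeta\,\vec n(p)$ that passes through \emph{both} $x$ and $y$. But $x$ and $y$ are arbitrary points of $\Omega$; there is no reason for them to lie on the same complex line, let alone on the complex normal line through some boundary point $p$. A one-dimensional affine disk cannot in general hit two prescribed points of $\mathbb{C}^n$ unless they are collinear in the complex sense, so the step ``reparametrize so that $x,y$ correspond to opposite points in $\mathbb{D}$'' is unavailable. The standard route (and the one in \cite{2015Estimates}) is instead to use the triangle inequality: push $x$ and $y$ separately along their respective inward normals to heights comparable to $|x-y|$, estimate each of those radial pieces by a normal-direction disk (this is where Dini-smoothness enters, exactly as you describe), and then handle the remaining ``horizontal'' piece, where both endpoints are now far from $\partial\Omega$ relative to their separation, by the bulk-regime ball inclusion. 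Your Dini discussion and your bulk-regime argument are fine; what is missing is this three-segment decomposition replacing the nonexistent single disk.
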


\subsection{Catlin-type metric}\label{sec2.3}
Suppose that $\Omega =\{z\in\mathbb{C}^{2}:r(z)<0\}$ is a smoothly bounded pseudoconvex domain of finite D'Angelo type, and $\xi \in \partial \Omega$ is a point of type $m_\xi$. By using a rotation of the canonical coordinates, we can arrange that the normal direction to $\partial \Omega$ at $\xi$ is given by the $\operatorname{Im} z_1$-axis. Supposing that $\xi=0$, and by using Implicit function theorem, we obtain a local defining function of the form $r\left(z_1, z_2\right)=$ $\operatorname{Im} z_2-F\left(z_1, \operatorname{Re} z_2\right)$, where $F$ is a smooth function and $F(0)=0$. As $\frac{\partial r}{\partial z_2}(\xi) \neq 0$, in a neighborhood of $\xi$ we can define the following vector fields
$$
L_1:=\frac{\partial}{\partial z_1}-\left(\frac{\partial r}{\partial z_2}\right)^{-1} \frac{\partial r}{\partial z_1} \frac{\partial}{\partial z_2}, \text { and } L_2:=\frac{\partial}{\partial z_2}.
$$
Note that $L_1r\equiv0$ and $L_1,L_2$ form a basis of $T_z^{1,0}$ for all $z$ near $\xi$.
For any $j, k>0$, set
$$
\mathcal{L}_{j, k}(z):=\underbrace{L_{1} \ldots L_{1}}_{j-1 \text { times }} \underbrace{\bar{L}_{1} \ldots \bar{L}_{1}}_{k-1 \text { times }} \partial\bar{\partial}r(L_1,\bar{L}_1)(z).
$$

As $\xi \in \partial \Omega$ is a point of type $m_\xi$, from the results in \cite[Theorem 2.4]{1977A}, it follows that there exist $j_0, k_0$ with $j_0+k_0=m_\xi$ which satisfy
$$
\mathcal{L}_{j, k}(\xi) =0 \quad j+k<m_\xi,\text{ and }\mathcal{L}_{j_0, k_0}(\xi) \neq 0.
$$
Denote
$$
C_{l}^{\xi}(z)=\max \left\{\left|\mathcal{L}_{j, k}(z)\right|: j+k=l\right\}.
$$
Let $X=b_1L_{1}+b_2 L_2$ be a holomorphic tangent vector at $z$. Now define the {\it Catlin metric}
$$
M_{\xi}(z, X):=\frac{|b_2|}{|r(z)|}+|b_1| \sum_{l=2}^{m_\xi}\left(\frac{C_{l}^{\xi}(z)}{|r(z)|}\right)^{\frac{1}{l}}.
$$

Owing to the result of Catlin\cite[Theorem 1]{1989Estimates}, the Kobayashi metric is locally bi-Lipschitz to the Catlin metric.
\begin{thm}
Let $\Omega =\{z\in \mathbb{C}^{2}:r(z)<0\}$ be a smoothly bounded pseudoconvex domain. If $\xi \in \partial \Omega$ be a point of finite type $m_\xi$, then there exist a neighborhood $U$ of $\xi$ and a constant $C \geq 1$ such that
$$
\frac{1}{C} M_{\xi}(z, X) \leq K_{\Omega}(z, X) \leq C M_{\xi}(z, X)
$$
for each $z \in \Omega \cap U$ and $X \in \mathbb{C}^{2}$.
\end{thm}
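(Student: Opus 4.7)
The plan is to prove the two inequalities by comparing $K_\Omega$ with the Kobayashi metric of an explicit family of polydisks adapted to the geometry at $\xi$. First I would fix $\xi=0\in\partial\Omega$ with local defining function $r(z_1,z_2)=\operatorname{Im}z_2-F(z_1,\operatorname{Re}z_2)$, and, for each $z_0\in\Omega$ close to $\xi$, construct a polynomial biholomorphism $\Phi_{z_0}:\mathbb{C}^2\to\mathbb{C}^2$ sending $z_0$ to a canonical position and absorbing the pure holomorphic terms of order $\leq m_\xi$ in the Taylor expansion of $r$. After this normalization, the essential data in $r\circ\Phi_{z_0}^{-1}$ are precisely the Catlin quantities $\mathcal{L}_{j,k}(z_0)$. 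I would then introduce the adapted bidisk $Q(z_0,c)=\{|\zeta_1|<c\,\tau(z_0),\,|\zeta_2|<c\,|r(z_0)|\}$ whose tangential radius is chosen so that $\sum_{l=2}^{m_\xi}(C_l^\xi(z_0)/|r(z_0)|)^{1/l}\tau(z_0)\asymp 1$, equivalently $\tau(z_0)\asymp\min_{2\leq l\leq m_\xi}(|r(z_0)|/C_l^\xi(z_0))^{1/l}$.

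Next I would establish the double inclusion $\Phi_{z_0}^{-1}(Q(z_0,c))\subset\Omega$ for some small $c>0$, and a matching upper containment $\Omega\cap W(z_0)\subset\Phi_{z_0}^{-1}(Q(z_0,C))$ on a suitable approach region $W(z_0)$ for some large $C$. The containment into $\Omega$ is the geometric heart: one uses pseudoconvexity to dominate the mixed terms of $r\circ\Phi_{z_0}^{-1}$ by the finite-type invariants $C_l^\xi(z_0)$, while the reverse containment follows by direct Taylor estimation. Combined with the product formula $K_{\mathbb{D}(0,R)\times\mathbb{D}(0,S)}(0;(v_1,v_2))=\max\{|v_1|/R,\,|v_2|/S\}$ and the contraction of the Kobayashi metric under the inclusion $\Phi_{z_0}^{-1}(Q(z_0,c))\hookrightarrow\Omega$, this gives the upper bound $K_\Omega(z,X)\lesssim M_\xi(z,X)$ essentially for free, with constants uniform in $z\in\Omega\cap U$.

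The main obstacle is the lower bound $K_\Omega\gtrsim M_\xi$, which requires producing holomorphic maps from $\Omega$ into $\mathbb{D}$ that actually detect each component of the Catlin metric. Catlin's original strategy in \cite{1989Estimates} is to construct bounded plurisubharmonic functions on $\Omega$ whose Levi forms reproduce the growth of $C_l^\xi$ near $\xi$; their existence exploits pseudoconvexity together with finite type in $\mathbb{C}^2$ in an essential way, and is the technical core of the argument. A cleaner modern alternative is the scaling method: apply the anisotropic dilation $\Lambda_{z_0}(\zeta)=(\zeta_1/\tau(z_0),\zeta_2/|r(z_0)|)$ to $\Phi_{z_0}(\Omega)$, show that the dilated domains converge normally as $z_0\to\xi$ to a pseudoconvex model $\{\operatorname{Im}w_2<P(w_1,\bar w_1)\}$ where $P$ is a subharmonic weighted-homogeneous polynomial of degree $m_\xi$, and transfer the bound via the semi-continuity of the Kobayashi metric under such limits, combined with the fact that the model domain is Kobayashi-hyperbolic with an explicit lower bound on its metric. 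Either route yields the desired two-sided comparison with constants uniform on $\Omega\cap U$.
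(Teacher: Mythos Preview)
The paper does not give a proof of this statement at all: it is quoted verbatim as Catlin's result \cite[Theorem 1]{1989Estimates} and used as a black box. So there is no ``paper's own proof'' to compare against; the authors simply cite the literature.

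Your proposal is a reasonable outline of how Catlin's theorem is actually proved. The upper bound via inclusion of adapted bidisks $\Phi_{z_0}^{-1}(Q(z_0,c))\subset\Omega$ is exactly the standard mechanism, and you correctly identify the lower bound as the substantive part. Of the two routes you sketch, the first---constructing bounded plurisubharmonic weights with large Hessian in the tangential direction---is Catlin's original argument in \cite{1989Estimates}; the scaling alternative is closer in spirit to the later treatments by Berteloot and others. Either is adequate as a high-level plan, but be aware that in an actual write-up the plurisubharmonic-weight construction is delicate (it is where pseudoconvexity and the two-dimensional restriction are genuinely used), and the scaling route requires care with the uniformity of the normal-family argument and the hyperbolicity of the limiting model $\{\operatorname{Im}w_2<P(w_1,\bar w_1)\}$. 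As a proposal this is fine; just note that for the purposes of the present paper no proof is expected---a citation suffices.
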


For our convenience, we may make a small change to the form of Catlin metrics. If $X$ is a holomorphic tangent vector at $z$, then it has the unique orthogonal decomposition $X=X_{H}+X_{N}$ by $X_{H} \in H_{\pi{(z)}} \partial \Omega$ and $X_{N} \in N_{\pi{(z)}} \partial \Omega$.
Here $\pi{(z)}$ is the closest point projected to the boundary (see \cite[Lemma2.1]{2000Gromov}).
As $L_2$ may be not parallel to $X_N$, we set
$$
\widetilde{M}_{\xi}(z, X):=\frac{|X_N|}{\delta(z)}+|X_H| \sum_{l=2}^{m_\xi}\left(\frac{C_{l}^\xi(z)}{\delta(z)}\right)^{\frac{1}{l}}.
$$
Lemma 2.2 in \cite{liu2023bi} implies that ${M}_{\xi}(z, X)\asymp\widetilde{M}_{\xi}(z, X)$ in a neighborhood $U$ of $\xi$.

Now choose open neighborhoods $U_i$ of $\xi_i\in\partial\Omega$, $1\leq i\leq s$, which form a finite cover of $\partial\Omega$. There exists a small $\varepsilon>0$ such that the neighborhood
$$
N_{\varepsilon}(\partial \Omega):=\left\{z \in \Omega : \delta_{\Omega}(z)<\varepsilon\right\}\subset\bigcup_{i=1}^{s}U_i.
$$
Denote $I_z:=\{i:z\in \overline{U}_i\}$, and set
\[\widetilde{M}(z,X):=\max_{i\in I_z}\{
\widetilde{M}_{\xi_i}(z, X)\} \text { for } z\in \Omega\cap N_{\varepsilon}(\partial \Omega).
\]
Since it is upper semi-continuous, we can define a global Finsler metric in $\Omega$ by
$$\widetilde{K}(z,X):=K(z,X)S(z,X)$$
with the positive function $S(z,X)\asymp1$, and $\widetilde{K}(z,X)=\widetilde{M}(z, X)$ for $z\in\Omega\cap N_\varepsilon(\partial\Omega)$,
which implies that $\widetilde{K}(z,X)\asymp K(z,X)$.

Denoting $m=\max\{m_{\xi_i}\}$, for $z\in\Omega\cap N_\varepsilon(\partial\Omega)$ and $X \in \mathbb{C}^{2}$, it now follows that
\begin{align}\label{nk}
\frac{|X_N|}{\delta(z)}+\frac{1}{C}\frac{|X_H|}{\delta(z)^{\frac{1}{m}}}\leq\widetilde{K}(z,X)\leq\frac{|X_N|}{\delta(z)}+C\frac{|X_H|}{\delta(z)^{\frac{1}{2}}}
\end{align}
for some constant $C>0$.
We may call it a Catlin-type metric, and denote by $d_{\widetilde{K}}$ the distance associated to this metric.

By using the proof of \cite[Theorem 1.12]{liu2023bi}, we know Catlin-type distances satisfy the following estimates.
\begin{lemma}\label{dg}
Assume that $\Omega \subset \mathbb{C}^{2}$ is a smoothly bounded pseudoconvex domain of finite type. Then there exists a constant $C \geq 1$ such that
$$
\left|\log\left(\frac{\delta(x)}{\delta(y)}\right)\right|-C\leq d_{\widetilde{K}}(x,y) \leq 2\log \left(1+\frac{|x-y|}{\sqrt{\delta(x) \delta(y)}}\right)+C
$$
for each $x,y \in \Omega$.
\end{lemma}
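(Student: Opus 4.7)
The two-sided bound~(\ref{nk}) for $\widetilde K$ is the main input, and I would prove the two inequalities separately.

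For the \emph{lower bound}, the key is the normal-component estimate $\widetilde K(z,X)\geq|X_N|/\delta(z)$ from~(\ref{nk}), valid on the collar $N_\varepsilon(\partial\Omega)$, where the distance $\delta$ is smooth with $|\nabla\delta|=1$ and $\nabla\delta$ parallel to the real normal line inside $N_{\pi(z)}\partial\Omega$. Given any piecewise $C^1$ curve $\gamma\colon[0,1]\to\Omega$ joining $x$ and $y$, this forces $|(\delta\circ\gamma)'(t)|\leq|\dot\gamma_N(t)|\leq\delta(\gamma(t))\,\widetilde K(\gamma(t);\dot\gamma(t))$ whenever $\gamma(t)\in N_\varepsilon$. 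Working with the truncation $\phi(t):=\log\max\{\delta(\gamma(t)),\varepsilon\}$, one gets $|\phi'(t)|\leq\widetilde K(\gamma,\dot\gamma)$ almost everywhere (trivially on the set where $\phi$ is locally constant), hence $\int_0^1\widetilde K\,dt\geq|\phi(1)-\phi(0)|\geq|\log(\delta(x)/\delta(y))|-2|\log\varepsilon|$. Infimizing over $\gamma$ yields the lower bound.

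For the \emph{upper bound}, I would build an explicit reference curve. Set $\eta:=\max\{\delta(x),\delta(y)\}+|x-y|$ and lift $x$ along the inward normal at $\pi(x)$ to a point $x_1$ with $\delta(x_1)\asymp\eta$; similarly lift $y$ to $y_1$. On the normal segment $x\to x_1$ the tangent vector has only a normal component of unit length, so by~(\ref{nk}) its $\widetilde K$-length is at most $\int_{\delta(x)}^{c\eta}\frac{ds}{s}=\log(\eta/\delta(x))+O(1)$; the lift $y\to y_1$ similarly contributes $\log(\eta/\delta(y))+O(1)$. Since $|x_1-y_1|\lesssim\eta$ and $\delta\asymp\eta$ along the straight segment $[x_1,y_1]$ (which lies in the thick part of $\Omega$), (\ref{nk}) bounds its $\widetilde K$-length by $O(1)$. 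Summing, $d_{\widetilde K}(x,y)\leq\log(\eta^2/(\delta(x)\delta(y)))+C$, and the $1$-Lipschitz property of $\delta$ gives $\eta\leq 2(\sqrt{\delta(x)\delta(y)}+|x-y|)$ (a short check using $|\delta(x)-\delta(y)|\leq|x-y|$), whence $\log(\eta^2/(\delta(x)\delta(y)))\leq 2\log(1+|x-y|/\sqrt{\delta(x)\delta(y)})+\log 4$.

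The main obstacle is producing the exact constant $2$ in the upper bound. A naive strategy would combine $\widetilde K\asymp K$ with Lemma~\ref{up} to give $d_{\widetilde K}(x,y)\leq C_1\log(1+C|x-y|/\sqrt{\delta(x)\delta(y)})$, but $C_1\log(1+t)$ cannot be rewritten as $2\log(1+t)+O(1)$ once $C_1>2$ and $t$ is large. Hence one must build the curve directly and exploit that the normal coefficient in~(\ref{nk}) is exactly $1/\delta(z)$, with no multiplicative constant. A minor technical point to verify is that the normal-lift segments remain inside $\Omega$; this is automatic when $\eta$ is smaller than the tubular-neighborhood radius of $\partial\Omega$, and otherwise the estimate reduces to a bound on a compact subset of $\Omega$.
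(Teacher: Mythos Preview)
Your overall strategy is the standard one and matches what the paper implicitly invokes: the paper does not give an independent proof of this lemma but simply appeals to the proof of \cite[Theorem~1.12]{liu2023bi}, which is exactly this kind of box/normal--lift argument. So in spirit you are doing the right thing.

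There is, however, a slip in your lower-bound argument. You want to truncate with
\[
\phi(t)=\log\min\{\delta(\gamma(t)),\varepsilon\},
\]
not $\log\max$. The estimate $\widetilde K(z,X)\geq |X_N|/\delta(z)$ from~(\ref{nk}) is only available on the collar $N_\varepsilon(\partial\Omega)=\{\delta<\varepsilon\}$, so you need $\phi$ to be constant on the complementary region $\{\delta\geq\varepsilon\}$; with $\max$ the truncation is constant precisely on the collar, where the estimate is good, and variable outside, where it is not. Moreover, with $\max$ the final inequality $|\phi(1)-\phi(0)|\geq|\log(\delta(x)/\delta(y))|-C$ is false: take $\delta(x)\to 0$ and $\delta(y)=1$; then $\phi(0)=\log\varepsilon$ stays bounded while $|\log(\delta(x)/\delta(y))|\to\infty$. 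With $\min$ both issues disappear and your argument goes through.

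A smaller point on the upper bound: as written, lifting to height $\eta=\delta(x)\vee\delta(y)+|x-y|$ does not quite guarantee $\delta\asymp\eta$ along $[x_1,y_1]$, since $|x_1-y_1|$ can be as large as $4|x-y|$ while $\delta(x_1)=\eta$, so $\delta$ may drop below zero along the segment. The usual fix is to lift to a fixed multiple $C\eta$; then $\delta\geq C\eta-4|x-y|\geq(C-4)\eta$ along $[x_1,y_1]$, and the extra vertical cost is the harmless additive $\log C$. After this adjustment your computation, including the use of $|\delta(x)-\delta(y)|\leq|x-y|$ to pass from $\eta$ to $\sqrt{\delta(x)\delta(y)}+|x-y|$, is correct and gives the sharp constant $2$ in front of the logarithm.
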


\subsection{Gromov hyperbolicity}\label{gh}

In this section we will give some definitions and results about Gromov hyperbolicity. Refer to \cite{Metric99} for further details.

Let $(X, d)$ be a metric space. The $Gromov\text{ } product$ of two points $x, y \in X$ with respect to a base point $\omega \in X$ is defined by
$$(x|y)_\omega:=\frac{1}{2}\left(d(x, \omega)+d(y, \omega)-d(x, y)\right).$$

Recall that a metric space $(X, d)$ is a $geodesic$ space if any two distinct points $x, y\in X$ can be joined by a $geodesic \text{ }segment$.
Furthermore, a metric space $(X, d)$ is called $proper$ if every closed ball in $(X, d)$ is compact.
A proper geodesic metric space $X$ is called $Gromov \text{ }hyperbolic$ if there is a constant $\delta\geq0$ such that, for any $x, y, z, \omega \in X$,
\begin{align}\label{gro}
(x|y)_\omega \geq \min \left\{(x|z)_\omega,(z|y)_\omega\right\}-\delta.
\end{align}
An equivalent definition of Gromov hyperbolicity is that each geodesic triangle is $\delta$-thin for some $\delta>0$, i.e., each side lies in the $\delta$-neighborhood of the other sides.

\begin{defn}
Let $(X,d)$ be a metric space and $I\subset\mathbb{R}$ be an interval. For $\lambda\geq1$ and $\kappa\geq0$, a map $\gamma: I \rightarrow X$ is called
a $(\lambda, \kappa)$-quasi-geodesic if for all $s,t\in I$,
\[
\lambda^{-1} |t-s|-\kappa \leq d(\gamma(t),\gamma(s)) \leq \lambda |t-s|+\kappa.
\]
In particular, $\gamma$ is called a geodesic when $\lambda=1,\kappa=0$, and $\gamma$ is called a $\lambda$-quasi-geodesic when $\kappa=0$.
\end{defn}

\begin{thmNoParens}[{\cite[Part \Rmnum{3}: Theorem 1.7]{Metric99}}]\label{stab}
Suppose that $(X,d)$ is a $\delta$-Gromov hyperbolic geodesic space with $\delta>0$. And suppose that $\gamma$ is a $(\lambda,\varepsilon)$-quasi-geodesic in $X$ and $[p, q]$ is a corresponding geodesic segment joining the endpoints of $\gamma$. Then there exists a constant $R = R(\delta,\lambda,\varepsilon)$ which satisfies that the Hausdorff distance between $[p, q]$ and the image of $\gamma$ is less than $R$.
\end{thmNoParens}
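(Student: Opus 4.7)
The target is the classical Morse-type stability statement: in a $\delta$-Gromov hyperbolic geodesic space, the image of any $(\lambda,\varepsilon)$-quasi-geodesic $\gamma$ joining $p$ and $q$ lies at bounded Hausdorff distance from any geodesic segment $[p,q]$, with the bound $R(\delta,\lambda,\varepsilon)$ depending only on those three constants. I would split the proof into the two inclusions. The nontrivial direction is to bound the excursion of $\gamma$ away from $[p,q]$; the reverse inclusion (every point of $[p,q]$ is close to the image of $\gamma$) follows from the slim-triangle characterization of Gromov hyperbolicity applied to the triangle formed by $p$, $q$, and a suitable vertex on $\gamma$.

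The first step is a reduction to a continuous, piecewise-geodesic quasi-geodesic. Sampling $\gamma$ at parameter spacing of order $\varepsilon$ and joining consecutive samples by geodesic segments yields a continuous curve $\gamma^{\ast}$ whose image lies within Hausdorff distance $O(\lambda+\varepsilon)$ of $\gamma$ and is itself a $(\lambda^\ast,\varepsilon^\ast)$-quasi-geodesic with constants depending only on $\lambda,\varepsilon$. This reduction is needed because the main mechanism of the proof invokes length, which a bare $(\lambda,\varepsilon)$-quasi-geodesic need not possess.

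For the excursion bound, set $D:=\max_{t} d(\gamma^{\ast}(t),[p,q])$ and let $[a,b]$ be a maximal sub-interval on which $\gamma^{\ast}$ stays outside the closed neighborhood $N_{D/2}([p,q])$. On one hand, the quasi-geodesic inequality gives
\[
\mathrm{length}\bigl(\gamma^{\ast}|_{[a,b]}\bigr)\le \lambda^{\ast}|b-a|+\varepsilon^{\ast}\lesssim d\bigl(\gamma^{\ast}(a),\gamma^{\ast}(b)\bigr)+1,
\]
and projecting $\gamma^{\ast}(a),\gamma^{\ast}(b)$ onto $[p,q]$ bounds the right-hand side linearly in $D$. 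On the other hand, the \emph{exponential divergence} property of Gromov hyperbolic spaces forces
\[
\mathrm{length}\bigl(\gamma^{\ast}|_{[a,b]}\bigr)\gtrsim 2^{D/(c\delta)}
\]
for an absolute constant $c$, once $D$ exceeds a threshold depending on $\delta$. Comparing the linear and exponential estimates forces $D$ to be bounded by a constant $R(\delta,\lambda,\varepsilon)$, which then gives the stability bound for the original $\gamma$ after undoing the $O(\lambda+\varepsilon)$ perturbation from the first step.

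The main obstacle is proving the exponential divergence estimate. The standard route is a recursive thin-triangles argument: any continuous path avoiding $N_R([p,q])$ whose endpoints are close to $[p,q]$ can be bisected into two halves, each of which connects points still near $[p,q]$ while avoiding $N_{R-O(\delta)}([p,q])$. This yields a factor of $2$ in length at every halving of $R$ and produces the exponential lower bound. Making the induction self-consistent requires careful bookkeeping of the endpoint conditions at each recursive step; this is where the full strength of $\delta$-hyperbolicity enters. Once the divergence estimate is in place, the stability theorem follows from the exponential-versus-linear comparison together with the short slim-triangles argument for the reverse inclusion.
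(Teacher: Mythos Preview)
The paper does not prove this statement at all: it is quoted verbatim as a known result from Bridson--Haefliger \cite[Part III: Theorem 1.7]{Metric99} and used as a black box in the arguments of Section~\ref{sec3}. Your sketch is a faithful outline of the classical proof given in that reference (taming the quasi-geodesic, then playing an exponential divergence lower bound against a linear upper bound, with the reverse inclusion handled by a thin-triangles/connectedness argument), so there is nothing to compare.
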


\section{Estimate of the Finsler distance}\label{sec3}

In this section we first prove Theorem \ref{Fin}. For convenience, we restate it as follows.
\begin{thm}
Let $\Omega \subset \mathbb{C}^{n}(n\geq2)$ be a bounded Gromov hyperbolic domain respect to a complete Finsler distance $d_F$. Suppose that there exist constants $C_1,C_2,\alpha>0,1>\beta>0$ with
\begin{align}\label{slow}
 d_F(x,y)\geq \alpha\left|\log\left(\frac{\delta(x)}{\delta(y)}\right)\right|-C_1, \:\: \text{ for any }x,y\in\Omega
\end{align}
and the Finsler metric
\begin{align}\label{fmet}
F(z,X)\geq\frac{C_2\left|X\right|}{\delta(z)^{\beta}}, \:\: \text{ for any } z\in\Omega,\; 0\neq X\in\mathbb{C}^n.
\end{align}
For each $x,y \in \Omega$, let $L(\gamma)$ denote the Euclidean length of a Finsler $\lambda$-quasi-geodesic $\gamma$ connecting $x$ and $y$ in $\Omega$.
Then there exists a constant $C=C(\lambda)>0$ such that
\[
d_F(x,y)\geq2\alpha\log\left(\frac{L(\gamma)^{\frac{1}{\beta}}}{\sqrt{\delta(x)\delta(y)}}\right)-C.
\]
\end{thm}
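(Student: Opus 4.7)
The plan is to bootstrap the geodesic version of the inequality, which is essentially \cite[Theorem 1.3]{liu2023bi}, to the $\lambda$-quasi-geodesic setting using the stability of quasi-geodesics in Gromov hyperbolic spaces (Theorem \ref{stab}).

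\textbf{Step 1 (Geodesic case).} Completeness of $d_F$ produces a $d_F$-geodesic $\sigma:[0,T_0]\to\Omega$ from $x$ to $y$, parameterized by $F$-arclength, with $T_0=d_F(x,y)$. Applying the slow-growth hypothesis (\ref{slow}) to the pairs $(x,\sigma(s))$ and $(\sigma(s),y)$ produces the envelope
\[
\delta(\sigma(s))\leq \min\bigl\{\delta(x)e^{(s+C_1)/\alpha},\ \delta(y)e^{(T_0-s+C_1)/\alpha}\bigr\},
\]
and the Finsler metric bound (\ref{fmet}) gives $|\dot\sigma(s)|\leq C_2^{-1}\delta(\sigma(s))^\beta$. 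Integrating and splitting at the crossover $s^{*}=T_0/2+(\alpha/2)\log(\delta(y)/\delta(x))$ yields
\[
L(\sigma)\leq C_0\bigl(\delta(x)\delta(y)\bigr)^{\beta/2}e^{\beta T_0/(2\alpha)},
\]
which upon taking logarithms is precisely the conclusion for $\sigma$.

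\textbf{Step 2 (From $\sigma$ to $\gamma$ via stability).} By Theorem \ref{stab} the Hausdorff $d_F$-distance between $\gamma$ and $\sigma$ is bounded by some $R=R(\lambda)$, absorbing the hyperbolicity constant into the notation. For each $z\in\gamma$, choose $\pi(z)\in\sigma$ with $d_F(z,\pi(z))\leq R$; then (\ref{slow}) gives $\delta(z)\leq e^{(R+C_1)/\alpha}\delta(\pi(z))$. Parameterize $\gamma$ by $F$-arclength on $[0,T]$ with $T\leq\lambda d_F(x,y)$ and use (\ref{fmet}) to write
\[
L(\gamma)\leq C_2^{-1}\int_0^T\delta(\gamma(t))^\beta\,dt \leq e^{(R+C_1)\beta/\alpha}C_2^{-1}\int_0^T\delta(\pi(\gamma(t)))^\beta\,dt.
\]
A standard consequence of Gromov hyperbolicity is that the nearest-point projection of a quasi-geodesic onto a geodesic is coarsely monotone and coarsely Lipschitz, with constants depending only on $\lambda$ and the hyperbolicity constant. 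After a change of variables this yields $\int_0^T\delta(\pi(\gamma(t)))^\beta\,dt\leq C(\lambda)\int_0^{T_0}\delta(\sigma(s))^\beta\,ds$, and combining with Step 1 gives $L(\gamma)\leq C'(\lambda)(\delta(x)\delta(y))^{\beta/2}e^{\beta d_F(x,y)/(2\alpha)}$, which is the desired inequality.

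\textbf{Main obstacle.} The sensitive point is to recover the sharp exponent $2\alpha$ rather than $2\alpha/\lambda$. A direct application of (\ref{slow}) along $\gamma$ yields only the weaker estimate, since the $F$-length of a $\lambda$-quasi-geodesic can be as large as $\lambda d_F(x,y)$, inflating the exponent by a factor $\lambda$. The improvement is extracted precisely from Gromov hyperbolicity via stability: the coarsely monotone-Lipschitz behavior of the projection $\gamma\to\sigma$ ensures that the length integral $\int_\gamma\delta^\beta$ is comparable to $\int_\sigma\delta^\beta$ up to a constant depending only on $\lambda$ and the hyperbolicity constant, so the favorable integral bound on $\sigma$ transfers to $\gamma$.
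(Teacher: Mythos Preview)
Your approach differs substantially from the paper's. The paper never reduces to the geodesic case; instead it works directly on $\gamma$, introducing a dyadic decomposition along level sets $\delta(\gamma(s_j))=2^{-(k-j)/\beta}H$ (where $H=\max_\gamma\delta$) and running a three-way case analysis, with further subcases, to establish
\[
d_F(x,y)\;\ge\;2\alpha\log\Bigl(\frac{H}{\sqrt{\delta(x)\delta(y)}}\Bigr)+c\,\frac{L(\gamma)}{H^\beta}-C,
\]
after which the right-hand side is minimized over $H$. Your strategy---settle the geodesic case via the envelope estimate and then transfer to $\gamma$ by Morse stability---is cleaner and sidesteps that case analysis entirely.

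However, Step~2 as written has a real gap. The comparison
\[
\int_0^T\delta\bigl(\pi(\gamma(t))\bigr)^\beta\,dt\;\le\;C(\lambda)\int_0^{T_0}\delta(\sigma(s))^\beta\,ds
\]
does \emph{not} follow merely from the projection being ``coarsely monotone and coarsely Lipschitz'': a map with only those two properties can linger near a fixed $s$-value for an arbitrarily long parameter interval, so the left side need not be controlled by the right. Two further ingredients, both available from your hypotheses, are needed. First, the \emph{lower} inequality in the $(\lambda,0)$-quasi-geodesic condition combined with stability forces $|t_1-t_2|\le\lambda(2R+1)$ whenever $s(t_1),s(t_2)$ lie in a common unit interval, giving a uniform bound on the measure of each preimage $\{t:s(t)\in[k,k+1]\}$. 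Second, hypothesis~(\ref{slow}) applied along $\sigma$ shows that $\delta\circ\sigma$ varies by at most a fixed multiplicative factor on each unit $s$-interval, so that $\max_{[k,k+1]}\delta(\sigma)^\beta\lesssim\int_k^{k+1}\delta(\sigma(s))^\beta\,ds$. Summing over $k$ then yields the displayed inequality, and with it your argument goes through with the sharp exponent $2\alpha$.
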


\begin{proof}
Now for any $\lambda$-quasi-geodesic $\gamma:[0,1] \rightarrow \Omega$ with $\gamma(0)=x$ and $\gamma(1)=y$, let $\eta:[0,1] \rightarrow \Omega$ be a geodesic with $\eta(0)=x$ and $\eta(1)=y$. Define $H:=\max _{z \in \gamma} \delta(z)$. There exists $t_0 \in[0,1]$ with $H=\delta\left(\gamma(t_0)\right)$.
Considering the subcurves $\gamma_1=\gamma |_{[0, t_0]}$ and $\gamma_2=\gamma |_{[t_0, 1]}$, there are two possibilities:

If $H \geq L(\gamma)^{\frac{1}{\beta}}$, as $(\Omega,\: d_F)$ is Gromov hyperbolic with $\delta>0$, there exists a constant $R = R(\delta,\lambda)$ such that the Hausdorff distance between $\gamma$ and $\eta$ is less than $R$. Choosing $t'_0\in[0,1]$ such that $d_K(\gamma(t_0),\eta(t'_0))\leq R$,
then by (\ref{slow}) we have
$$H=\delta(\gamma(t_0))\asymp\delta(\eta(t'_0))$$
and
$$
d_F(x,y)=d_F(x,\eta(t'_0))+d_F(\eta(t'_0),y) \geq \alpha\log \left(\frac{\delta(\eta(t'_0))}{\delta(x)}\right)+\alpha\log \left(\frac{\delta(\eta(t'_0))}{\delta(y)}\right).
$$
Thus
$$
d_F(x,y) \geq 2\alpha\log \left(\frac{H}{\sqrt{\delta(x)\delta(y)}}\right)-C\geq 2\alpha\log \left(\frac{L(\gamma)^{\frac{1}{\beta}}}{\sqrt{\delta(x)\delta(y)}}\right)-C,
$$
which completes the proof.

The other possibility is $H<L(\gamma)^{\frac{1}{\beta}}$. Since $\delta(x) \leq H$, there exists $k \in \mathbb{N}_+$ with
$$2^{-\frac{k}{\beta}} H<\delta(x) \leq 2^{-\frac{k-1}{\beta}} H.$$

Then we shall consider the following three alternatives:

(a). Consider the curve $\gamma_1$ and define $0=s_0 \leq s_1<\cdots<s_k \leq t_0$ as follows,
$$s_j=\min \left\{s \in\left[0, t_0\right]: \delta(\gamma(s))=\frac{H}{2^{\frac{k-j}{\beta}}}\right\}, \:\:\: j=1, \ldots, k.$$
By denoting $x_j=\gamma\left(s_j\right), \: j=0, \ldots, k$, we have
\begin{align}\label{bound}
1 \leq \frac{\delta\left(x_j\right)}{\delta\left(x_{j-1}\right)} \leq 2^{\frac{1}{\beta}}.
\end{align}

In the first case we assume that there exists an index $l \in\{1, \ldots, k\}$ with
$$
L(\gamma|_{[s_{l-1}, s_l]})>\frac{1}{8} 2^{-(k-l)} L(\gamma).
$$
Then, for $t \in\left[s_{l-1}, s_l\right]$, we have
$$
\delta(\gamma(t)) \leq 2^{-\frac{k-l}{\beta}} H,
$$
which implies that
\begin{align}\label{est}
L_F\left(\gamma |_{\left[s_{l-1}, s_l\right]}\right)
& \gtrsim \int_{s_{l-1}}^{s_l}\frac{|\dot{\gamma}(t)|}{{\delta(\gamma(t))}^{\beta}} dt
\geq\frac{2^{k-l}}{H^{\beta}} \int_{s_{l-1}}^{s_l}|\dot{\gamma}(t)| dt\\
&=\frac{2^{k-l}L(\gamma |_{\left[s_{l-1}, s_l\right]})}{H^{\beta}}
\notag\gtrsim\frac{L(\gamma)}{H^{\beta}}.
\end{align}

For $t_1:=s_k \leq t_0$, we choose $t'_1\in[0,1]$ such that $d_F(\gamma(t_1),\eta(t'_1))\leq R$. Then we have $H=\delta(\gamma(t_1))\asymp\delta(\eta(t'_1))$.
Let $[\gamma(t_1),\eta(t'_1)]$ be a Finsler geodesic connecting $\gamma(t_1)$ and $\eta(t'_1)$.
As $(\Omega,\: d_F)$ is Gromov hyperbolic, there exists a constant $M=M(\delta,\lambda)$ such that $x_{l-1},x_l\in N_M(\eta|_{[0,t'_1]}\cup[\gamma(t_1),\eta(t'_1)])$. Here $N_M(A):=\{z\in\Omega:d_F(z,A)<M\}$ for a subset $A$.

(1) If $x_{l-1},x_l\in N_M(\eta|_{[0,t'_1]})$, choose $x'_{l-1},x'_l\in\eta|_{[0,t'_1]}$ such that $d_F(x_{l-1},x'_{l-1})<M$ and $d_F(x_l,x'_l)<M$. It means $\delta(x_{l-1})\asymp\delta(x'_{l-1})$ and $\delta(x_l)\asymp\delta(x'_l)$.
Denote $x'_{l-1}=\eta(s'_{l-1})$ and $x'_l=\eta(s'_l)$ for $s'_{l-1},s'_l\in[0,t'_1]$.
Without loss of generality, we may suppose that $s'_{l-1}\leq s'_l$.
As $$d_F(x'_{l-1},x'_l)\geq d_F(x_{l-1},x_l)-2M\geq\frac{1}{\lambda}L_F\left(\gamma |_{\left[s_{l-1}, s_l\right]}\right)-2M,$$
combining (\ref{bound}) we have
\begin{align}
\notag d_F(x,y)&=d_F(x,x'_{l-1})+d_F(x'_{l-1},x'_l)+d_F(x'_l,\eta(t'_1))+d_F(\eta(t'_1),y)\\
\notag&\geq 2\alpha\log \left(\frac{\delta(\eta(t'_1))}{\sqrt{\delta(x)\delta(y)}}\right)+\alpha\log \left(\frac{\delta(x'_{l-1})}{\delta(x'_{l})}\right)
+\frac{1}{\lambda}L_F\left(\gamma |_{\left[s_{l-1}, s_l\right]}\right)-2M\\
\notag&\geq 2\alpha\log \left(\frac{H}{\sqrt{\delta(x)\delta(y)}}\right)+C\frac{L(\gamma)}{H^{\beta}}-C.
\end{align}

(2) If $x_{l-1},x_l\in N_M([\gamma(t_1),\eta(t'_1)])$, we choose $x'_{l-1},x'_l\in[\gamma(t_1),\eta(t'_1)]$ such that $d_F(x_{l-1},x'_{l-1})<M$ and $d_F(x_l,x'_l)<M$. So
$$d_F(x_{l-1},x_l)\leq d_F(x_{l-1},x'_{l-1})+d_F(x'_{l-1},x'_l)+d_F(x'_l,x_l)<2M+R.$$
Then
\begin{align}
\notag d_F(x,y)&\geq d_F(x,\eta(t'_1))+d_F(\eta(t'_1),y)+d_F(x_{l-1},x_l)-2M-R\\
\notag&\geq 2\alpha\log \left(\frac{\delta(\eta(t'_1))}{\sqrt{\delta(x)\delta(y)}}\right)+\frac{1}{\lambda}L_F\left(\gamma |_{\left[s_{l-1}, s_l\right]}\right)-C\\
\notag&\geq 2\alpha\log \left(\frac{H}{\sqrt{\delta(x)\delta(y)}}\right)+C\frac{L(\gamma)}{H^{\beta}}-C.
\end{align}

(3) If $x_{l-1},x_l$ do not satisfy the previous two cases, without loss of generality, we may suppose that
$x_{l-1}\in N_M(\eta|_{[0,t'_1]})$ and $x_l\in N_M([\gamma(t_1),\eta(t'_1)])$.
Choosing $x'_{l-1}\in\eta|_{[0,t'_1]}$ and $x'_l\in[\gamma(t_1),\eta(t'_1)]$ such that $d_F(x_{l-1},x'_{l-1})<M$ and $d_F(x_l,x'_l)<M$, we have
\begin{align}
\notag d_F(x_{l-1},x_l)&\leq d_F(x_{l-1},x'_{l-1})+d_F(x'_{l-1},\eta(t'_1))+d_F(\eta(t'_1),x'_l)+d_F(x'_l,x_l)\\
\notag &<d_F(x'_{l-1},\eta(t'_1))+2M+R.
\end{align}
Then we have
\begin{align}
\notag d_F(x,y)&\geq d_F(x,x'_{l-1})+d_F(x'_{l-1},\eta(t'_1))+d_F(\eta(t'_1),y)+d_F(\eta(t'_1),x'_l)-R\\
\notag&\geq 2\alpha\log \left(\frac{\delta(\eta(t'_1))}{\sqrt{\delta(x)\delta(y)}}\right)+\alpha\log \left(\frac{\delta(x'_{l-1})}{\delta(x'_{l})}\right)+d_F(x_{l-1},x_l)-2M-2R\\
\notag&\geq 2\alpha\log \left(\frac{H}{\sqrt{\delta(x)\delta(y)}}\right)+C\frac{L(\gamma)}{H^{\beta}}-C.
\end{align}

(b). Now consider the curve $\gamma_2$ and define $1=s_0^{\ast} \geq s_1^{\ast}>\cdots>s_k^{\ast} \geq t_0$ as follows,
$$s_j^{\ast}=\max \left\{s \in\left[t_0,1\right]: \delta(\gamma(s))=\frac{H}{2^{\frac{k-j}{\beta}}}\right\}, \:\:\: j=1, \ldots, k^{\ast}.$$
By denoting $x_j^{\ast}=\gamma\left(s_j^{\ast}\right), \: j=0, \ldots, k^{\ast}$, we have
\[
1 \leq \frac{\delta\left(x_j^{\ast}\right)}{\delta\left(x_{j-1}^{\ast}\right)} \leq 2^{\frac{1}{\beta}}.
\]

The second alternative is that there exists an index $l \in\{1, \ldots, k^{\ast}\}$ with
$$
L(\gamma|_{[s_{l}^{\ast}, s_{l-1}^{\ast}]})>\frac{1}{8} 2^{-(k^{\ast}-l)} L(\gamma).
$$

By applying similar considerations to the curve $\gamma_2$ instead of $\gamma_1$,
we can find $t_2 \in\left[t_0, 1\right]$ such that
$$d_F(x,y)\geq 2\alpha\log\left(\frac{H}{\sqrt{\delta(x)\delta(y)}}\right)+C\frac{L(\gamma)}{H^{\beta}}-C.$$

(c). The third alternative is
$$
L(\gamma|_{[s_{j-1}, s_j]}) \leq \frac{1}{8} 2^{-(k-j)} L(\gamma), \:\:\: j=1, \ldots, k,
$$
and
$$
L(\gamma|_{[s_{j-1}^{\ast}, s_j^{\ast}]}) \leq \frac{1}{8} 2^{-(k^{\ast}-j)} L(\gamma), \:\:\: j=1, \ldots, k^{\ast}.
$$
Then
$$
L(\gamma|_{[0, t_1]})=\sum_{j=1}^k L(\gamma|_{[s_{j-1}, s_j]}) \leq \frac{1}{4} L(\gamma)\text{ and }
L(\gamma|_{[t_2, 1]})=\sum_{j=1}^{k^\ast} L(\gamma|_{[s_{j-1}^{\ast}, s_j^{\ast}]}) \leq \frac{1}{4} L(\gamma).
$$
follows. We have
$$
L(\gamma|_{[t_1, t_2]})=L(\gamma)-L(\gamma|_{[0, t_1]})-L(\gamma|_{[t_2, 1]}) \geq \frac{1}{2} L(\gamma).
$$
Then similar to inequality (\ref{est}), it follows that $d_F(\gamma(t_1),\gamma(t_2))\gtrsim\frac{L(\gamma)}{H^{\beta}}$.

Now we choose $t'_1,t'_2\in[0,1]$ such that  $d_F(\gamma(t_1),\eta(t'_1))<R$ and $d_F(\gamma(t_2),\eta(t'_2))<R$. It means that
$$H=\delta(\gamma(t_1))\asymp\delta(\eta(t'_1)),\;\;\;H=\delta(\gamma(t_2))\asymp\delta(\eta(t'_2)).$$
Without loss of generality, we may suppose that $t'_1\leq t'_2$.
Therefore, we now obtain
\begin{align}
\notag d_F(x,y)&\geq d_F(x,\eta(t'_1))+d_F(\eta(t'_1),\eta(t'_2))+d_F(\eta(t'_2),y)\\
\notag&\geq 2\alpha\log \left(\frac{\sqrt{\delta(\eta(t'_1))\delta(\eta(t'_2))}}{\sqrt{\delta(x)\delta(y)}}\right)+d_F(\gamma(t_1),\gamma(t_2))-2R\\
\notag&\geq 2\alpha\log \left(\frac{H}{\sqrt{\delta(x)\delta(y)}}\right)+C\frac{L(\gamma)}{H^{\beta}}-C,
\end{align}
which implies that the above estimate is true in any case.

Let
$$
f(t):=2\alpha\log\left(\frac{t}{\sqrt{\delta(x)\delta(y)}}\right)+C\frac{L(\gamma)}{t^{\beta}}.
$$
Through a simple calculation, we know the function $f$ gets its minimum value when
$$
t=\left(\frac{\beta C}{2\alpha}\right)^{\frac{1}{\beta}}L(\gamma)^{\frac{1}{\beta}}
$$
which gives the lower bound
$$
d_F(x,y) \geq 2\alpha\log \left(\frac{L(\gamma)^{\frac{1}{\beta}}}{\sqrt{\delta(x)\delta(y)}}\right)-C,
$$
which completes the proof.
\end{proof}

Here we would like to provide an additional estimate which can also yield the above result by considering the condition (\ref{slow}).
Although the index is not more precise by using this approach and it has no direct connection with other aspects, we include it for readers who are interested.

\begin{proposition}
Let $\Omega \subset \mathbb{C}^{n}(n\geq2)$ be a bounded domain and let $d_F$ be a complete Finsler distance. Suppose that there exist constants $C_1,C_2,\alpha>0,1>\beta>0$ with
\[
 d_F(z,z_0)\leq \alpha\log\left(\frac{C_1}{\delta(z)}\right), \:\: \text{ for any }z \text{ and a fix }z_0 \in\Omega,
\]
and the Finsler metric
\[
F(z,X)\geq\frac{C_2\left|X\right|}{\delta(z)^{\beta}}, \:\: \text{ for any } z\in\Omega,\; 0\neq X\in\mathbb{C}^n.
\]
For each $x,y \in \Omega$, let $L(\gamma)$ denote the Euclidean length of a Finsler $\lambda$-quasi-geodesic $\gamma$ connecting $x$ and $y$ in $\Omega$ and $H_\gamma:=\sup_{z\in\gamma} \delta(z)$. Then there exists a constant $C=C(\lambda)>0$ such that
\[
L(\gamma)<C\text{ and }H_\gamma^\beta\gtrsim\frac{L(\gamma)}{\log{\frac{2C}{L(\gamma)}}}.
\]
\end{proposition}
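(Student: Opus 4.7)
The plan is to combine three inputs: the integral lower bound coming from the Finsler metric, the length control supplied by the $\lambda$-quasi-geodesic assumption, and the logarithmic upper bound on $d_F$. First, integrating $F(\gamma,\dot\gamma)\geq C_2|\dot\gamma|/\delta(\gamma)^{\beta}$ along $\gamma$ and using $\delta(z)\leq H_\gamma$ for every $z\in\gamma$ would yield
\[
L_F(\gamma)\geq\frac{C_2\,L(\gamma)}{H_\gamma^{\beta}}.
\]
Parameterizing $\gamma$ by Finsler arclength, the $\lambda$-quasi-geodesic condition gives $L_F(\gamma)\leq\lambda\,d_F(x,y)$, while the triangle inequality through $z_0$ combined with the hypothesized upper bound yields $d_F(x,y)\leq\alpha\log(C_1/\delta(x))+\alpha\log(C_1/\delta(y))$.

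Next I would exploit that the endpoints of $\gamma$ sit close to its deepest point. Fixing $z^{\ast}\in\gamma$ with $\delta(z^{\ast})=H_\gamma$, the Euclidean triangle inequality for $\delta$ together with $|x-z^{\ast}|,\,|y-z^{\ast}|\leq L(\gamma)$ forces $\delta(x),\delta(y)\geq H_\gamma-L(\gamma)$ whenever this quantity is positive. Combining the previous estimates produces the implicit relation
\[
\frac{C_2\,L(\gamma)}{H_\gamma^{\beta}}\leq 2\alpha\lambda\log\frac{C_1}{H_\gamma-L(\gamma)}.
\]
In the principal regime $L(\gamma)\leq H_\gamma/2$, the right-hand side is at most $2\alpha\lambda\log(2C_1/H_\gamma)$. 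Since $H_\gamma\leq\mathrm{diam}(\Omega)$ and the map $H\mapsto H^{\beta}\log(2C_1/H)$ is bounded on $(0,\mathrm{diam}(\Omega)]$ (with maximum of order $(2C_1)^{\beta}/(e\beta)$), I would obtain $L(\gamma)<C$ for an explicit $C=C(\lambda,\alpha,\beta,C_1,C_2,\mathrm{diam}(\Omega))$. Then using $H_\gamma\geq 2L(\gamma)$ to replace $\log(2C_1/H_\gamma)$ by $\log(2C/L(\gamma))$, after possibly enlarging $C$, delivers the desired $H_\gamma^{\beta}\gtrsim L(\gamma)/\log(2C/L(\gamma))$.

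The main obstacle I anticipate is the complementary regime $L(\gamma)>H_\gamma/2$, in which $H_\gamma-L(\gamma)$ can vanish and the above logarithmic estimate degenerates. To deal with it I would adapt the layered dyadic decomposition used in the proof of Theorem~\ref{Fin}: partition $\gamma$ into sub-arcs on which the depth varies by a bounded dyadic factor, apply the principal-regime argument to each layer, and sum the contributions. Equivalently, in this regime one has $H_\gamma^{\beta}\leq 2^{\beta}L(\gamma)^{\beta}$, so the target inequality reduces to $L(\gamma)^{1-\beta}\lesssim\log(2C/L(\gamma))$, which is automatic as soon as the a priori bound $L(\gamma)<C$ has been secured; bootstrapping from the principal regime applied to a suitably chosen shallow sub-curve of $\gamma$ should close this last gap.
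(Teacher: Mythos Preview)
Your principal-regime argument is sound: when $L(\gamma)\leq H_\gamma/2$ the endpoint depths $\delta(x),\delta(y)$ are comparable to $H_\gamma$, and combining $L_F(\gamma)\geq C_2L(\gamma)/H_\gamma^{\beta}$ with $L_F(\gamma)\lesssim\lambda d_F(x,y)\lesssim\log(2C_1/H_\gamma)$ does yield both $L(\gamma)<C$ and the desired lower bound on $H_\gamma^{\beta}$. The complementary regime, however, is a genuine gap rather than a technicality. The condition $L(\gamma)>H_\gamma/2$ gives only the \emph{upper} bound $H_\gamma<2L(\gamma)$; it says nothing about how small $H_\gamma$ may be relative to $L(\gamma)$. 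A quasi-geodesic can travel a long Euclidean distance while remaining uniformly close to $\partial\Omega$, so that $H_\gamma\ll L(\gamma)$ and $\delta(x),\delta(y)\ll L(\gamma)$. Your ``reduction'' to $L(\gamma)^{1-\beta}\lesssim\log(2C/L(\gamma))$ substitutes an upper bound for $H_\gamma^{\beta}$ into what must be a lower bound, which goes the wrong way; and without endpoint-depth control you have no route to $L(\gamma)<C$ in this regime either. The dyadic/bootstrapping suggestion does not close this as stated: a sub-arc in the principal regime gives a bound on \emph{its} Euclidean length in terms of \emph{its} maximal depth, but summing such estimates reproduces the division-of-curves arguments the proposition is meant to bypass, and still requires an organizing inequality you have not supplied.

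The paper's proof avoids any case split by exploiting the quasi-geodesic hypothesis \emph{pointwise} rather than only at the endpoints. Parametrize $\gamma$ on $[a,b]$ and pick $T$ with $\delta(\gamma(T))=H_\gamma$. Applying the assumed upper bound on $d_F$ to the pair $(\gamma(T),\gamma(t))$ together with $\lambda^{-1}|T-t|\leq d_F(\gamma(T),\gamma(t))$ gives the decay $\delta(\gamma(t))\lesssim\exp\bigl(-|T-t|/(2\lambda\alpha)\bigr)$ for \emph{every} $t$. Since $F(\gamma(t),\dot\gamma(t))\leq\lambda$ a.e., one has $|\dot\gamma(t)|\lesssim\delta(\gamma(t))^{\beta}$; integrating and splitting at a scale $M$ yields
\[
L(\gamma)\lesssim M H_\gamma^{\beta}+\exp\Bigl(-\tfrac{\beta M}{2\lambda\alpha}\Bigr),
\]
and choosing $M\asymp\log\bigl(2C/L(\gamma)\bigr)$ produces both conclusions simultaneously. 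The key difference from your approach is that the paper keeps $\delta(\gamma(t))$ inside the integral and controls it with the exponential decay, whereas you replace it by the crude global bound $H_\gamma$ before integrating, which throws away exactly the information needed when the curve runs long near the boundary.
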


\begin{proof}
For a Finsler $\lambda$-quasi-geodesic $\gamma:[a,b]\rightarrow\Omega$ with $\gamma(a)=x$ and $\gamma(b)=y$, there exists $T\in[a,b]$ such that $H_\gamma=\delta(\gamma(T))$. Since
\[
\frac{1}{\lambda}|T-t|\leq d_F(\gamma(T),\gamma(t))\leq d_F(\gamma(T),z_0)+d_F(z_0,\gamma(t))
\leq \alpha\log\left(\frac{1}{\delta(\gamma(T))\delta(\gamma(t))}\right)+C,
\]
we have
\[
\delta(\gamma(t))\leq\sqrt{\delta(\gamma(T))\delta(\gamma(t))}\leq \exp\left(-\frac{|T-t|}{2\lambda\alpha}+C\right).
\]
As $\gamma$ is a $\lambda$-quasi-geodesic, it follows that $F(\gamma(t),\dot{\gamma}(t))\leq \lambda$ almost everywhere.
Then
\begin{align}
\notag L(\gamma)&=\int_a^b|\dot{\gamma}(t)|dt\leq\frac{\lambda}{C_2}\int_a^b\delta(\gamma(t))^{\beta}dt\\
\notag&\lesssim\int_{[a,b]\cap[T-M,T+M]}\delta(\gamma(t))^{\beta}dt+\int_{[a,b]\cap[T-M,T+M]^c}\exp\left(-\frac{\beta|T-t|}{2\lambda\alpha}+C\right)dt\\
\notag &\lesssim 2MH_{\gamma}^{\beta}+\frac{4\lambda\alpha}{\beta}\exp\left(-\frac{\beta M}{2\lambda\alpha}+C\right)
\leq CMH_{\gamma}^{\beta}+C\exp\left(-\frac{\beta M}{2\lambda\alpha}\right).
\end{align}
As $H_{\gamma}$ has an upper bound in $\Omega$, there exist a constant $\widetilde{C}>0$ such that $L(\gamma)<\widetilde{C}$. Choosing $C>\widetilde{C}$ and letting $M=\frac{2\lambda\alpha}{\beta}\log\frac{2C}{L(\gamma)}>0$, we have $C\exp\left(-\frac{\beta M}{2\lambda\alpha}\right)=\frac{L(\gamma)}{2}$.
Hence
\[
H_{\gamma}^{\beta}\geq\frac{L(\gamma)}{2CM}\gtrsim\frac{L(\gamma)}{\log\frac{2C}{L(\gamma)}},
\]
which completes the proof.
\end{proof}
\begin{rmk}
Note that in \cite[Lemma 3.1]{2022Bi}, Liu-Wang-Zhou obtained similar estimates for $m$-convex domains. Later, Nikolov-\"{O}kten \cite[Corollary 11]{NA22} generalized the result to strongly Goldilocks domains as defined by them. However, previous proofs are based on the division of curves. Here we give a proof in integral form.
\end{rmk}

Now we begin to prove the Gehring-Hayman type theorem for smoothly bounded pseudoconvex domains of finite type in $\mathbb{C}^2$.
\begin{thm}
Let $\Omega\subset \mathbb{C}^2$ be is a smoothly bounded pseudoconvex domain of finite type $m$. Then there exists a constant $C=C(\lambda)>0$ such that for every $x,y\in\Omega$
\[L(\gamma)\leq C|x-y|^{\frac{1}{m}},\]
where $\gamma$ is a Kobayashi $\lambda$-quasi-geodesic connecting $x$ and $y$ with $\lambda\geq1$. Moreover, denoting $H_\gamma=\max_{z\in\gamma}\delta(z)$, there exists a constant $C'=C'(\lambda)>0$ such that
\[H_\gamma^{\frac{1}{m}}\geq C'L(\gamma).\]
\end{thm}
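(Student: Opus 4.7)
The plan is to obtain Theorem \ref{lgh} as a direct consequence of Theorem \ref{Fin} applied to the Catlin-type Finsler metric $\widetilde{K}$ of Section \ref{sec2.3}, combined with the upper bound of Lemma \ref{dg} and a short case analysis. The hypotheses of Theorem \ref{Fin} are all in place for the metric $d_{\widetilde{K}}$: since $\widetilde{K}\asymp K_\Omega$, the pair $(\Omega, d_{\widetilde{K}})$ is Gromov hyperbolic by the well-known Gromov hyperbolicity of smoothly bounded pseudoconvex domains of finite type in $\mathbb{C}^2$; the slow estimate (\ref{slow}) with $\alpha = 1$ is the left half of Lemma \ref{dg}; and the infinitesimal lower bound (\ref{fmet}) with $\beta = 1/m$ is the content of (\ref{nk}), since the normal contribution $|X_N|/\delta$ dominates $|X_N|/\delta^{1/m}$ near $\partial\Omega$ and the tangential contribution is already in that form.

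Applying Theorem \ref{Fin} thus gives $d_{\widetilde{K}}(x,y)\geq 2\log\bigl(L(\gamma)^m/\sqrt{\delta(x)\delta(y)}\bigr) - C$, which combined with the upper bound from Lemma \ref{dg} produces the core inequality
\[
L(\gamma)^m \;\leq\; C\bigl(|x-y| + \sqrt{\delta(x)\delta(y)}\bigr). \qquad (\ast)
\]
To extract (\ref{GH}) from $(\ast)$ I would dichotomize on whether $|x-y|\geq \sqrt{\delta(x)\delta(y)}$. If yes, the claim is immediate. Otherwise $\delta(x)\asymp\delta(y)\gtrsim |x-y|$, so $d_K(x,y) = O(1)$ by Lemma \ref{up}, and the standard polydisc estimate $B_K(x,R)\subset P(x,c\delta(x),c\delta(x)^{1/m})$ valid on pseudoconvex domains of finite type in $\mathbb{C}^2$ confines $\gamma$ to a Catlin polydisc on which $\widetilde{K}\asymp|X_N|/\delta(x)+|X_H|/\delta(x)^{1/m}$. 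Integrating the normal and tangential components of $\dot{\gamma}$ separately against the Kobayashi length bound $L_{\widetilde{K}}(\gamma)\lesssim|x-y|/\sqrt{\delta(x)\delta(y)}$ then yields $L(\gamma)\lesssim|x-y|^{1/m}$ in the interior regime as well.

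For (\ref{Sep}) I would split on whether $H_\gamma\geq L(\gamma)^m$. The positive case is immediate. In the opposite case, the finer analysis of cases (a), (b), (c) inside the proof of Theorem \ref{Fin} gives the improved lower bound
\[
d_{\widetilde{K}}(x,y) \;\geq\; 2\log\bigl(H_\gamma/\sqrt{\delta(x)\delta(y)}\bigr) + c\,\frac{L(\gamma)}{H_\gamma^{1/m}} - C,
\]
and coupling this with Lemma \ref{dg} together with the elementary inequality $H_\gamma\geq\max(\delta(x),\delta(y))$ produces $L(\gamma)/H_\gamma^{1/m}\lesssim \log(1+|x-y|/H_\gamma) + C$. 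The residual range $|x-y|\gg H_\gamma$ is then handled by showing that a Kobayashi $\lambda$-quasi-geodesic must reach depth $\gtrsim|x-y|$, so in fact $H_\gamma\gtrsim|x-y|$; this is again a polydisc/scaling argument, applied now to subsegments of $\gamma$.

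The main obstacle is the polydisc analysis shared by both the interior step of (\ref{GH}) and the residual step of (\ref{Sep}): it requires quantitative control on how a Kobayashi $\lambda$-quasi-geodesic interacts with the Catlin polydisc around a base point. Once this is settled, the theorem follows from stacking the two-sided Kobayashi bounds of Lemmas \ref{up} and \ref{dg} with the abstract Gromov-hyperbolic machinery of Theorem \ref{Fin}.
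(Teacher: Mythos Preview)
Your route to (\ref{GH}) is essentially the paper's: apply Theorem \ref{Fin} to the Catlin-type metric, couple with Lemma \ref{dg} to get $L(\gamma)^m\lesssim|x-y|+\sqrt{\delta(x)\delta(y)}$, and split on the size of $|x-y|$ versus $\sqrt{\delta(x)\delta(y)}$. In the interior case the paper avoids the polydisc machinery entirely: it first bounds $H_\gamma\lesssim|x-y|+\sqrt{\delta(x)\delta(y)}$ via stability of quasi-geodesics, and then uses the pointwise inequality $|\dot\gamma(t)|/\delta(\gamma(t))^{1/m}\lesssim K_\Omega(\gamma(t),\dot\gamma(t))$ to get $L(\gamma)/H_\gamma^{1/m}\lesssim L_K(\gamma)\leq\lambda\,d_K(x,y)\lesssim|x-y|/\sqrt{\delta(x)\delta(y)}$. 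Your polydisc argument amounts to the same estimate once you observe $H_\gamma\asymp\delta(x)$ in that regime, so for (\ref{GH}) the two approaches agree up to cosmetic bookkeeping.

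For (\ref{Sep}), however, your argument has a genuine gap. The claim that a Kobayashi $\lambda$-quasi-geodesic must reach depth $\gtrsim|x-y|$ is false on finite-type domains with $m\geq 2$: already in a model domain of type $m$, two points at tangential separation $a$ and small depth are joined by a geodesic that rises only to depth $\asymp a^m$, so $H_\gamma\asymp|x-y|^m\ll|x-y|$. With only $H_\gamma\gtrsim|x-y|^m$ in hand, your inequality $L(\gamma)/H_\gamma^{1/m}\lesssim\log(1+|x-y|/H_\gamma)+C$ does not close, since the right side can blow up like $(m-1)\log(1/|x-y|)$. The paper sidesteps this entirely with a Gromov-product argument: fixing an interior basepoint $w$, one combines the standard comparison $d_{\widetilde{K}}(w,\eta)\leq(x|y)_w^{\widetilde{K}}+2\delta$ with the lower bound $d_{\widetilde{K}}(w,\eta)\geq\log(\delta(w)/H)-C$ from Lemma \ref{dg} and the upper bound $(x|y)_w^{\widetilde{K}}\leq\log(C/L(\gamma)^m)$ coming from Theorem \ref{Fin}, to obtain $H\gtrsim L(\gamma)^m$ directly; stability then transfers this to $H_\gamma$. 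This gives (\ref{Sep}) in one stroke, without any residual case.
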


\begin{proof}
Since Kobayashi distance is bi-Lipschitz to Catlin-type distance, the Kobayashi $\lambda$-quasi-geodesic $\gamma$ is Catlin-type $\lambda'$-quasi-geodesic. By Theorem \ref{Fin} and Lemma \ref{dg}, from inequality (\ref{nk}) we have
\[
2\log\left(\frac{L(\gamma)^{m}}{\sqrt{\delta(x)\delta(y)}}\right)-C\leq d_{\widetilde{K}}(x,y)\leq 2\log \left(1+\frac{|x-y|}{\sqrt{\delta(x) \delta(y)}}\right)+C.
\]
It follows that
\[L(\gamma)^m\lesssim |x-y|+\sqrt{\delta(x)\delta(y)}.\]
Hence if $\sqrt{\delta(x)\delta(y)}\leq|x-y|$, we have the desired estimation $L(\gamma)\lesssim |x-y|^{\frac{1}{m}}$.

When $\sqrt{\delta(x)\delta(y)}>|x-y|$, let $\eta$ be a Catlin-type geodesic connecting $x$ and $y$. Denoting $H:=\max_{z\in\eta}\delta(z)$, it follows from Lemma \ref{dg} that
\[2\log\left(\frac{H}{\sqrt{\delta(x)\delta(y)}}\right)\leq d_{\widetilde{K}}(x,y)\leq2\log \left(1+\frac{|x-y|}{\sqrt{\delta(x) \delta(y)}}\right)+C,\]
which implies $H\lesssim |x-y|+\sqrt{\delta(x)\delta(y)}$.

From the result of Fiacchi \cite{2022Gromov} or Li-Pu-Wang \cite{23LPW} recently, we know $(\Omega,\: d_{\widetilde{K}})$ is Gromov hyperbolic with $\delta>0$.
Applying Theorem \ref{stab}, there exists a constant $R=R(\delta,\lambda')$ such that the Hausdorff distance between $\eta$ and $\gamma$ is less than $R$. Denoting $H_\gamma:=\max_{\omega\in\gamma}\delta(\omega)=\delta(\omega_0)$, then there exists a point $z_0\in\eta$ with $d_{\widetilde{K}}(\omega_0,z_0)\leq R$.

Applying Lemma \ref{dg}, we have $\delta(\omega_0)\asymp\delta(z_0)$, which implies that
\[H_\gamma\lesssim\delta(z_0)\leq H\lesssim|x-y|+\sqrt{\delta(x)\delta(y)}.\]

By using Lemma \ref{up}, we get
$$
\begin{aligned}
\frac{L(\gamma)}{H_\gamma^{\frac{1}{m}}}&\leq \int_0^1\frac{|\dot{\gamma}(t)|}{\delta(\gamma(t))^{\frac{1}{m}}}dt\lesssim \int_0^1K(\gamma(t),\dot{\gamma}(t))dt= L_K(\gamma)\leq \lambda d_K(x,y)\\
&\lesssim\log\left(1+\frac{C|x-y|}{\sqrt{\delta(x)\delta(y)}}\right)\lesssim\frac{|x-y|}{\sqrt{\delta(x)\delta(y)}}.
\end{aligned}
$$
Then
\[
L(\gamma)\lesssim\frac{\left(|x-y|+\sqrt{\delta(x)\delta(y)}\right)^{\frac{1}{m}}|x-y|}{\sqrt{\delta(x) \delta(y)}}\lesssim|x-y|^{\frac{1}{m}}.
\]

Similary, from the Gromov hyperbolicity, it also has $H\lesssim H_\gamma$. Fix a point $w\in\Omega$. By using Theorem \ref{Fin} and Lemma \ref{dg}, we obtain that
\[\log\left(\frac{\delta(w)}{H}\right)-C\leq d_{\widetilde{K}}(w,\eta)\leq(x|y)_w^{\widetilde{K}}+2\delta\leq\log\left(\frac{C}{L(\gamma)^m}\right).\]
Hence $H_\gamma\gtrsim H\gtrsim L(\gamma)^m$, which completes the proof.
\end{proof}

\section{The proof of Theorem \ref{comp}}\label{sec4}

To prove Theorem \ref{comp}, we first require the localization result of Kobayashi metric. This is based on Royden's Localization Lemma \cite[Lemma 2]{royden2006remarks}, with its proof available in \cite[Lemma 4]{Graham75}. Noting $\tanh(x)\geq1-2e^{-2x}$ for $x\geq0$, the Lemma 3.1 in \cite{2023Sar} can be stated as follows.
\begin{lemma}\label{metric}
Let $\Omega\subset\mathbb{C}^n$ is a Kobayashi hyperbolic domain and $U$ is an open subset of $\mathbb{C}^n$ such that $U\cap\Omega\neq\emptyset$ and connected. Then for every $W\subset\subset U$ with $W\cap\Omega\neq\emptyset$ and $d_K(W\cap\Omega,\Omega\backslash U)>0$, there exists a constant $L>0$ such that
\[
K_\Omega(z,X)\leq K_{U\cap\Omega}(z,X)\leq\left(1+Le^{-2d_K(z,\Omega\backslash U)}\right)K_\Omega(z,X).
\]
for all $z\in W\cap\Omega$ and $X\in\mathbb{C}^n$.
\end{lemma}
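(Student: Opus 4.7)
The lemma is a reformulation of Royden's Localization Lemma \cite{royden2006remarks,Graham75}: the substantive work has already been done there, and what remains is to repackage the resulting $\coth$-factor into the exponential form stated here, using the uniform positive lower bound on $d_{K_\Omega}(\,\cdot\,,\Omega\setminus U)$ furnished by the hypothesis $W\subset\subset U$. The left inequality $K_\Omega(z,X)\le K_{U\cap\Omega}(z,X)$ is immediate from the contraction property of the Kobayashi metric under the holomorphic inclusion $\iota:U\cap\Omega\hookrightarrow\Omega$: any competitor $f\in\operatorname{Hol}(\mathbb{D},U\cap\Omega)$ with $f(0)=z$ and $df_0(\xi)=X$ produces $\iota\circ f\in\operatorname{Hol}(\mathbb{D},\Omega)$ with the same data, and taking infima gives the claim.

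For the right inequality I would invoke the Royden--Graham lemma in its classical form,
\[
K_{U\cap\Omega}(z,X)\le \coth\!\bigl(d_{K_\Omega}(z,\Omega\setminus U)\bigr)\,K_\Omega(z,X),\qquad z\in U\cap\Omega,\ X\in\mathbb{C}^n,
\]
which is proved by a Schwarz-lemma shrinkage: any holomorphic disc in $\Omega$ through $z$ can be restricted to a slightly smaller disc whose image remains in $U\cap\Omega$, and the rate of shrinkage is controlled precisely by the Kobayashi distance from $z$ to the complement of $U$ in $\Omega$.

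To finish I would use the elementary identity $\coth(x)-1=2e^{-2x}/(1-e^{-2x})$ for $x>0$. Setting $x_0:=d_{K_\Omega}(W\cap\Omega,\Omega\setminus U)>0$, the hypothesis guarantees $x:=d_{K_\Omega}(z,\Omega\setminus U)\ge x_0$ for every $z\in W\cap\Omega$, so $e^{-2x}\le e^{-2x_0}<1$ and hence
\[
\coth(x)\le 1+\frac{2}{1-e^{-2x_0}}\,e^{-2x}.
\]
Choosing $L:=2/(1-e^{-2x_0})$ and combining with the Royden--Graham estimate above yields the stated bound. The equivalent reformulation $\tanh(x)\ge 1-2e^{-2x}$ highlighted just before the lemma is simply a repackaging of this geometric-series estimate. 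There is no genuine obstacle beyond Royden's lemma itself; the only point that requires care is uniformity of $L$ over $W\cap\Omega$, which is delivered precisely by the hypothesis that $d_{K_\Omega}(W\cap\Omega,\Omega\setminus U)$ is bounded below by a positive constant.
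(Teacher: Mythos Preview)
Your proposal is correct and matches the paper's approach: the paper does not give a self-contained proof but simply cites Royden's Localization Lemma (via Graham) together with the elementary inequality $\tanh(x)\ge 1-2e^{-2x}$, and attributes the reformulated statement to \cite[Lemma~3.1]{2023Sar}. Your argument spells out exactly this derivation, using the equivalent $\coth$ form and the uniform lower bound $x_0=d_{K_\Omega}(W\cap\Omega,\Omega\setminus U)>0$ to absorb the denominator $1-e^{-2x_0}$ into the constant $L$.
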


\noindent$Proof\;of\;Theorem\;\ref{comp}$.
By Lemma \ref{metric}, it follows that there exist a neighborhood $V_0$ of $\xi$ with $V_0\subset\subset U$ and a constant $C>0$ such that, for $z\in\Omega\cap V_0$ and $X\in\mathbb{C}^n$,
\[
K_\Omega(z,X)\leq K_{\Omega\cap U}(z,X)\leq\left(1+Ce^{-2d_K(z,\Omega\backslash U)}\right)K_\Omega(z,X).
\]

Hence we only need to check the right side of inequality (\ref{loc1}) and (\ref{loc2}).
By using the lower bound in \cite[Theorem 1.12]{liu2023bi}, we have
\[
d_{\widetilde{K}}(z,\Omega\backslash U)\geq\min_{\omega\in\Omega\backslash U}2\log\left(\frac{{|z-\omega|}^{m}+\delta(z)\vee\delta(\omega)}{\sqrt{\delta(z)\delta(\omega)}}\right)-C\geq\log\frac{1}{\delta(z)}-C.
\]
Then there exists a constant $A\geq1$ such that
\[
d_K(z,\Omega\backslash U)\geq\frac{1}{A}d_{\widetilde{K}}(z,\Omega\backslash U)\geq\frac{1}{A}\log\frac{1}{\delta(z)}-C,
\]
which means that
\begin{align}\label{loc}
K_{\Omega\cap U}(z,X)\leq\left(1+C{\delta(z)}^{\frac{2}{A}}\right)K_\Omega(z,X).
\end{align}

For a Kobayashi geodesic $\gamma:[a,b]\rightarrow\Omega$ with $\gamma(a)=x$ and $\gamma(b)=y$, there exists $T\in[a,b]$ such that $H_{\gamma}=\delta(\gamma(T))=\max_{t\in[a,b]}\delta(\gamma(t))$. By using the proof of Theorem \ref{lgh}, it follows that
\[
H_{\gamma}\lesssim|x-y|+\sqrt{\delta(x)\delta(y)}\text{ and }L(\gamma)\lesssim|x-y|^\frac{1}{m} .
\]
As $L(\gamma)\rightarrow0$ when $|x-y|\rightarrow0$, we can choose a neighborhood $V$ of $\xi$ with $V\subset V_0$ such that, for any $x,y\in\Omega\cap V$ , the Kobayashi geodesic $\gamma$ connecting $x$ and $y$ is in $\Omega\cap V_0$.
Since
\begin{align}
\notag|T-t|&= d_K(\gamma(T),\gamma(t))\leq \log\left(1+\frac{C|x-y|}{\sqrt{\delta(\gamma(T))\delta(\gamma(t))}}\right)\\
\notag &\leq \log\left(\frac{H_{\gamma}+L(\gamma)}{\sqrt{\delta(\gamma(T))\delta(\gamma(t))}}\right)+C,
\end{align}
we have
\[
\delta(\gamma(t))\leq\sqrt{\delta(\gamma(T))\delta(\gamma(t))}\leq \left(H_{\gamma}+L(\gamma)\right)e^{-|T-t|}.
\]

Then for $x,y\in\Omega\cap V$, when $|x-y|\geq\sqrt{\delta(x)\delta(y)}$ we obtain that
\[
\int_a^b{\delta(\gamma(t))}^{\frac{2}{A}}K_\Omega(\gamma(t),\dot{\gamma}(t))dt
\lesssim\left(H_{\gamma}+L(\gamma)\right)^{\frac{2}{A}}\int_{\mathbb{R}}e^{-|T-t|}dt\lesssim|x-y|^{\frac{2}{Am}}.
\]
And when $|x-y|<\sqrt{\delta(x)\delta(y)}$, we have
\begin{align}
\notag&\int_a^b{\delta(\gamma(t))}^{\frac{2}{A}}K_\Omega(\gamma(t),\dot{\gamma}(t))dt
\lesssim H_{\gamma}^{\frac{2}{A}}L_K(\gamma)
\lesssim H_{\gamma}^{\frac{2}{A}}\log\left(1+\frac{C|x-y|}{\sqrt{\delta(x)\delta(y)}}\right)\\
\notag&\lesssim\frac{\left(|x-y|+\sqrt{\delta(x)\delta(y)}\right)^{\frac{2}{A}}|x-y|}{\sqrt{\delta(x) \delta(y)}}\lesssim|x-y|^{\frac{2}{A}}.
\end{align}
Hence
\[
d_{K_{\Omega\cap U}}(x,y)\leq L_{K}(\gamma)+C
\int_a^b{\delta(\gamma(t)}^{\frac{2}{A}}K_\Omega(\gamma(t),\dot{\gamma}(t))dt
\leq d_{K_\Omega}(x,y)+C|x-y|^{\frac{2}{Am}}.
\]
Additionally, for $z\in\gamma$ it follows that
$$
\delta_\Omega(z)\leq\delta_\Omega(x)\wedge\delta_\Omega(y)+L(\gamma)
\lesssim\delta_\Omega(x)\wedge\delta_\Omega(y)+|x-y|^{\frac{1}{m}}.
$$
Therefore, for $x\neq y\in\Omega\cap V$, by estimate (\ref{loc}) we deduce that
\[
\frac{d_{K_{\Omega\cap U}}(x,y)}{d_{K_\Omega}(x,y)}\leq 1+C\left(\delta_\Omega(x)\wedge\delta_\Omega(y)+|x-y|^{\frac{1}{m}}\right)^{\frac{2}{A}},
\]
which completes the proof.
$\hfill\qed$

\vspace{0.3cm} \noindent{\bf Acknowledgements}. The authors would like to thank Professor Jinsong Liu for many precious suggestions.

\bibliography{reference}
\bibliographystyle{plain}{}
\end{document}